\documentclass[final,hidelinks]{dmtcs-episciences}
\usepackage{amssymb, amsmath, amsthm, latexsym}
\usepackage{eucal}
\usepackage{ytableau}
\usepackage{undertilde}
\usepackage{cancel}
\usepackage{caption}
\usepackage[pdftex]{graphicx}

\newtheorem{thm}{Theorem}[section]
\newtheorem{cor}[thm]{Corollary}
\newtheorem{lem}[thm]{Lemma}
\newtheorem{prop}[thm]{Proposition}

\newtheorem{fact}[thm]{Fact}

\theoremstyle{definition}

\theoremstyle{remark}

\numberwithin{equation}{section}

\begin{document}

\title[Demazure tableaux sets' convexity, parabolic Catalan numbers]{Convexity of tableau sets for type A Demazure characters (key polynomials), parabolic Catalan numbers}

\author{Robert A. Proctor\affiliationmark{1} \and Matthew J. Willis\affiliationmark{2}}

\affiliation{University of North Carolina, Chapel Hill, NC 27599 U.S.A. \\
Wesleyan University, Middletown, CT 06457 U.S.A.}

\keywords{Demazure character, Key polynomial, Convex integral polytope, Pattern avoiding permutation, Catalan number, Symmetric group parabolic quotient}

\received{2017-12-17}

\revised{2018-6-13}

\accepted{2018-7-8}

\publicationdetails{20}{2018}{2}{3}{4158}

\maketitle

\begin{abstract}

This is the first of three papers that develop structures which are counted by a ``parabolic'' generalization of Catalan numbers.  Fix a subset  $R$  of  $\{1,2,..,n-1 \}$.  Consider the ordered partitions of  $\{1,2,..,n\}$  whose block sizes are determined by  $R$.  These are the ``inverses'' of (parabolic) multipermutations whose multiplicities are determined by  $R$.  The standard forms of the ordered partitions are referred to as ``$R$-permutations''.  The notion of  312-avoidance is extended from permutations to $R$-permutations.

Let  $\lambda$  be a partition of  $N$  such that the set of column lengths in its shape is  $R$  or  $R \cup \{n\}$.  Fix an  $R$-permutation   $\pi$.  The type A Demazure character (key polynomial) in  $x_1, ..., x_n$  that is indexed by  $\lambda$  and  $\pi$  can be described as the sum of the weight monomials for some of the semistandard Young tableau of shape  $\lambda$  that are used to describe the Schur function indexed by  $\lambda$.  Descriptions of these ``Demazure'' tableaux developed by the authors in earlier papers are used to prove that the set of these tableaux is convex in  $\mathbb{Z}^N$  if and only if  $\pi$  is  $R$-312-avoiding if and only if the tableau set is the entire principal ideal generated by the key of $\pi$.  These papers were inspired by results of Reiner and Shimozono and by Postnikov and Stanley concerning coincidences between Demazure characters and flagged Schur functions.  This convexity result is used in the next paper to deepen those results from the level of polynomials to the level of tableau sets.

The  $R$-parabolic Catalan number is defined to be the number of  $R$-312-avoiding permutations.  These special  $R$-permutations are reformulated as  ``$R$-rightmost clump deleting'' chains of subsets of $\{1, 2, ..., n \}$ and as ``gapless  $R$-tuples'';  the latter  $n$-tuples arise in multiple contexts in these papers.

\end{abstract}

\vspace{1pc}\noindent\textbf{MSC Codes.}  {05E10, 05A05, 14M15, 52B12}

\section{Introduction}

\noindent This is the first of three papers that develop and use structures which are counted by a ``parabolic'' generalization of the Catalan numbers.  Apart from some motivating remarks,  it can be read by anyone interested in tableaux.  It is self-contained,  except for a few references to its tableau precursors \cite{Wi2} and \cite{PW1}.  Fix  $n \geq 1$  and set  $[n-1] := \{1,2,...,n-1\}$.  Choose a subset  $R \subseteq [n-1]$  and set  $r := |R|.$  The section on  $R$-Catalan numbers can be understood as soon as a few definitions have been read.  Our ``rightmost clump deleting'' chains of sets defined early in Section 4 became Exercise 2.202 in Stanley's list \cite{Sta} of interpretations of the Catalan numbers.

Consider the ordered partitions of the set $[n]$  with  $r+1$  blocks of fixed sizes that are determined by using  $R$  to specify ``dividers''.  These ordered partitions can be viewed as being the ``inverses'' of multipermutations whose  $r+1$  multiplicities are determined by  $R$.  Setting  $J := [n-1] \backslash R$,  these multipermutations depict the minimum length coset representatives forming  $W^J$  for the quotient of  $S_n$  by the parabolic subgroup  $W_J$.  We refer to the standard forms of the ordered partitions as ``$R$-permutations''.  When  $R = [n-1]$,  the  $R$-permutations are just the permutations of  $[n]$.  The number of  312-avoiding permutations of  $[n]$  is the  $n^{th}$ Catalan number.  In 2012 we generalized the notion of 312-pattern avoidance for permutations to that of ``$R$-312-avoidance'' for  $R$-permutations.  Here we define the ``parabolic  $R$-Catalan number'' to be the number of  $R$-312-avoiding  $R$-permutations.

Let  $N \geq 1$  and fix a partition  $\lambda$  of  $N$.  The shape of  $\lambda$  has  $N$  boxes;  assume that it has at most  $n$  rows.  Let  $\mathcal{T}_\lambda$  denote the set of semistandard Young tableaux of shape  $\lambda$  with values from  $[n]$.  The content weight monomial  $x^{\Theta(T)}$  of a tableau  $T$ in $\mathcal{T}_\lambda$ is formed from the census  $\Theta(T)$  of the values from  $[n]$  that appear in  $T$.  The Schur function in  $x_1, ..., x_n$  indexed by  $\lambda$  can be expressed as the sum over  $T$ in $\mathcal{T}_\lambda$  of the content weight monomials  $x^{\Theta(T)}$.  Let  $R_\lambda \subseteq [n-1]$  be the set of column lengths in the shape  $\lambda$  that are less than  $n$.  The type A Demazure characters (key polynomials) in  $x_1, ..., x_n$  can be indexed by pairs  $(\lambda,\pi)$,  where  $\lambda$  is a partition as above and  $\pi$  is an  $R_\lambda$-permutation.  We refer to these as ``Demazure polynomials''.  The Demazure polynomial indexed by  $(\lambda,\pi)$  can be expressed as the sum of the monomials  $x^{\Theta(T)}$  over a set  $\mathcal{D}_\lambda(\pi)$  of ``Demazure tableaux'' of shape  $\lambda$  for the  $R$-permutation  $\pi$.

Regarding  $\mathcal{T}_\lambda$  as a poset via componentwise comparison,  it can be seen that the principal order ideals $[T]$ in  $\mathcal{T}_\lambda$   form convex polytopes in  $\mathbb{Z}^N$.  The set $\mathcal{D}_\lambda(\pi)$ can be seen to be a certain subset of the ideal $[Y_\lambda(\pi)]$, where the tableau $Y_\lambda(\pi)$ is the ``key'' of $\pi$.  It is natural to ask for which $R$-permutations $\pi$ one has $\mathcal{D}_\lambda(\pi) = [Y_\lambda(\pi)]$.  Our first main result is:  If  $\pi$  is an  $R_\lambda$-312-avoiding  $R_\lambda$-permutation,  then the tableau set  $\mathcal{D}_\lambda(\pi)$  is all of the principal ideal $[Y_\lambda(\pi)]$  (and hence is convex in  $\mathbb{Z}^N$).  Our second main result is conversely:  If  $\mathcal{D}_\lambda(\pi)$  forms a convex polytope in  $\mathbb{Z}^N$  (this includes the principal ideals $[Y_\lambda(\pi)]$),  then the  $R_\lambda$-permutation  $\pi$  is  $R_\lambda$-312-avoiding.  So we can say exactly when one has $\mathcal{D}_\lambda(\pi) = [Y_\lambda(\pi)]$.  Our earlier papers \cite{Wi2} and \cite{PW1} gave the first tractable descriptions of the Demazure tableau sets  $\mathcal{D}_\lambda(\pi)$.  Those results provide the means to prove the main results here.

Demazure characters arose in 1974 when Demazure introduced certain  $B$-modules while studying singularities of Schubert varieties in the  $G/B$  flag manifolds.  Flagged Schur functions arose in 1982 when Lascoux and Sch{\"u}tzenberger were studying Schubert polynomials for the flag manifold  $GL(n)/B$.  Like the Demazure polynomials,  the flagged Schur functions in  $x_1, ..., x_n$  can be expressed as sums of the weight monomial  $x^{\Theta(T)}$  over certain subsets of  $\mathcal{T}_\lambda$.  Reiner and Shimozono \cite{RS} and then Postnikov and Stanley \cite{PS} described coincidences between the Demazure polynomials and the flagged Schur functions.  Beginning in 2011,  our original motivation for this project was to better understand their results.  In the second paper \cite{PW3} in this series,  we deepen their results:  Rather than obtaining coincidences at the level of polynomials,  we employ the main results of this paper to obtain the coincidences at the underlying level of the tableau sets that are used to describe the polynomials.  Fact \ref{fact320.3}, Proposition \ref{prop320.2}, and Theorem \ref{theorem340} are also needed in \cite{PW3}.  In Section 8 we indicate why our characterization of convexity for the sets $\mathcal{D}_\lambda(\pi)$ may be of interest in algebraic geometry and representation theory.

Each of the two main themes of this series of papers is at least as interesting to us as is any one of the stated results in and of itself.  One of these themes is that the structures used in the three papers are counted by numbers that can be regarded as being ``parabolic'' generalizations of the Catalan numbers.  In these three papers these structures are respectively introduced to study convexity, the coincidences, and to solve a problem concerning the ``nonpermutable'' property of Gessel and Viennot.  It turned out that by 2014,  Godbole, Goyt, Herdan, and Pudwell had independently introduced \cite{GGHP} a general notion of pattern avoidance for ordered partitions that includes our notion of  $R$-312-avoidance for  $R$-permutations.  Apparently their motivations for developing their definition were purely enumerative.  Chen, Dai, and Zhou obtained \cite{CDZ} further enumerative results.  As a result of the work of these two groups,  two sequences were added to the OEIS.  As is described in our last section, one of those is formed from the counts considered here for a sequence of particular cases.  The other of those is formed by summing the counts considered here for all cases.  In our series of papers,  the parabolic Catalan count arises ``in nature'' in a number of interrelated ways.  In this first paper this quantity counts ``gapless  $R$-tuples'',  ``$R$-rightmost clump deleting chains'', and convex Demazure tableau sets.  The parabolic Catalan number further counts roughly another dozen structures in our two subsequent papers.  After the first version \cite{PW2} of this paper was initially distributed,  we learned of a different (but related) kind of parabolic generalization of the Catalan numbers due to M{\"u}hle and Williams.  This is described at the end of this paper.

The other main theme of this series of papers is the ubiquity of some of the structures that are counted by the parabolic Catalan numbers.  The gapless  $R$-tuples arise as the images of the  $R$-312-avoiding $R$-permutations under the  $R$-ranking map in this paper and as the minimum members of the equivalence classes for the indexing $n$-tuples of a generalization of the flagged Schur functions in our second paper.  Moreover,  the  $R$-gapless condition provides half of the solution to the nonpermutability problem considered in our third paper \cite{PW4}.  Since the gapless  $R$-tuples and the structures equivalent to them are enumerated by a parabolic generalization of Catalan numbers,  it would not be surprising if they were to arise in further contexts.

The material in this paper first appeared as one-third of the overly long manuscript \cite{PW2}.  The second paper \cite{PW3} in this series presents most of the remaining material from \cite{PW2}.  Section 11 of \cite{PW3} describes the projecting and lifting processes that relate the notions of 312-avoidance and of $R$-312-avoidance.

Definitions are presented in Sections 2 and 3.  In Section 4 we reformulate the  $R$-312-avoiding  $R$-permutations as  $R$-rightmost clump deleting chains and as gapless  $R$-tuples.  To prepare for the proofs of our two main results,  in Section 5 we associate certain tableaux to these structures.  Our main results are presented in Sections 6 and 7.  Section 8 indicates why convexity for the sets of $\mathcal{D}_\lambda(\pi)$ may be of further interest,  and Section 9 contains remarks on enumeration.

\section{General definitions and definitions of $\mathbf{\emph{R}}$-tuples}

In posets we use interval notation to denote principal ideals and convex sets.  For example, in $\mathbb{Z}$ one has $(i, k] = \{i+1, i+2, ... , k\}$.  Given an element $x$ of a poset $P$, we denote the principal ideal $\{ y \in P : y \leq x \}$ by $[x]$.  When $P = \{1 < 2 < 3 < ... \}$, we write $[1,k]$ as $[k]$.  If $Q$ is a set of integers with $q$ elements, for $d \in [q]$ let $rank^d(Q)$ be the $d^{th}$ largest element of $Q$.  We write $\max(Q) := rank^1(Q)$ and $\min(Q) := rank^q(Q)$.  A set $\mathcal{D} \subseteq \mathbb{Z}^N$ for some $N \geq 1$ is a \textit{convex polytope} if it is the solution set for a finite system of linear inequalities.

Fix $n \geq 1$ throughout the paper.  Except for $\zeta$, various lower case Greek letters indicate various kinds of $n$-tuples of non-negative integers. Their entries are denoted with the same letter.  An $nn$-\textit{tuple} $\nu$ consists of $n$ \emph{entries} $\nu_i \in [n]$ that are indexed by \emph{indices} $i \in [1,n]$.  An $nn$-tuple $\phi$ is a \textit{flag} if $\phi_1 \leq \ldots \leq \phi_n$.  An \emph{upper tuple} is an $nn$-tuple $\upsilon$ such that $\upsilon_i \geq i$ for $i \in [n]$.  The upper flags are the sequences of the $y$-coordinates for the above-diagonal Catalan lattice paths from $(0, 0)$ to $(n, n)$.  A \emph{permutation} is an $nn$-tuple that has distinct entries.  Let $S_n$ denote the set of permutations.  A permutation $\pi$ is $312$-\textit{avoiding} if there do not exist indices $1 \leq a < b < c \leq n$ such that $\pi_a > \pi_b < \pi_c$ and $\pi_a > \pi_c$.  (This is equivalent to its inverse being 231-avoiding.)  Let $S_n^{312}$ denote the set of 312-avoiding permutations.  By Exercises 116 and 24 of \cite{Sta}, these permutations and the upper flags are counted by the Catalan number $C_n := \frac{1}{n+1}\binom{2n}{n}$.

Fix $R \subseteq [n-1]$ through the end of Section 7.  Denote the elements of $R$ by $q_1 < \ldots < q_r$ for some $r \geq 0$.  Set $q_0 := 0$ and $q_{r+1} := n$.  We use the $q_h$ for $h \in [r+1]$ to specify the locations of $r+1$ ``dividers'' within $nn$-tuples:  Let $\nu$ be an $nn$-tuple.  On the graph of $\nu$ in the first quadrant draw vertical lines at $x = q_h + \epsilon$ for $h \in [r+1]$ and some small $\epsilon > 0$.  In Figure 7.1 we have $n = 9$ and $R = \{ 2, 3, 5, 7 \}$.  These $r+1$ lines indicate the right ends of the $r+1$ \emph{carrels} $(q_{h-1}, q_h]$ \emph{of $\nu$} for $h \in [r+1]$.  An \emph{$R$-tuple} is an $nn$-tuple that has been equipped with these $r+1$ dividers.  Fix an $R$-tuple $\nu$;  we portray it by $(\nu_1, ... , \nu_{q_1} ; \nu_{q_1+1}, ... , \nu_{q_2}; ... ; \nu_{q_r+1}, ... , \nu_n)$.  Let $U_R(n)$ denote the set of upper $R$-tuples.  Let $UF_R(n)$ denote the subset of $U_R(n)$ consisting of upper flags.  Fix $h \in [r+1]$.  The $h^{th}$ carrel has $p_h := q_h - q_{h-1}$ indices.  The $h^{th}$ \emph{cohort} of $\nu$ is the multiset of entries of $\nu$ on the $h^{th}$ carrel.

An \emph{$R$-increasing tuple} is an $R$-tuple $\alpha$ such that  $\alpha_{q_{h-1}+1} < ... < \alpha_{q_h}$ for $h \in [r+1]$.  Let $UI_R(n)$ denote the subset of $U_R(n)$ consisting of $R$-increasing upper tuples.  Consult Table 2.1 for an example and a nonexample.  Boldface entries indicate failures. It can be seen that $|UI_R(n)| = n! / \prod_{h=1}^{r+1} p_h!  =: \binom{n}{R}$.  An $R$-\textit{permutation} is a permutation that is $R$-increasing when viewed as an $R$-tuple.  Let $S_n^R$ denote the set of $R$-permutations.  Note that $| S_n^R| = \binom{n}{R}$.  We refer to the cases $R = \emptyset$ and $R = [n-1]$ as the \emph{trivial} and \emph{full cases} respectively.  Here $| S_n^\emptyset | = 1$ and $| S_n^{[n-1]} | = n!$ respectively.  An $R$-permutation $\pi$ is $R$-$312$-\textit{containing} if there exists $h \in [r-1]$ and indices $1 \leq a \leq q_h < b \leq q_{h+1} < c \leq n$ such that $\pi_a > \pi_b < \pi_c$ and $\pi_a > \pi_c$.  An $R$-permutation is $R$-$312$-\textit{avoiding} if it is not $R$-$312$-containing.  (This is equivalent to the corresponding multipermutation being 231-avoiding.)  Let $S_n^{R\text{-}312}$ denote the set of $R$-312-avoiding permutations.  We define the \emph{$R$-parabolic Catalan number} $C_n^R$ by $C_n^R := |S_n^{R\text{-}312}|$.

\begin{figure}[h!]
\begin{center}
\begin{tabular}{lccc}
\underline{Type of $R$-tuple} & \underline{Set} & \underline{Example} & \underline{Nonexample} \\ \\
$R$-increasing upper tuple & $\alpha \in UI_R(n)$ & $(2,6,7;4,5,7,8,9;9)$ & $(3,5,\textbf{5};6,\textbf{4},7,8,9;9)$ \\ \\
$R$-312-avoiding permutation & $\pi \in S_n^{R\text{-}312}$  & $(2,3,6;1,4,5,8,9;7)$ & $(2,4,\textbf{6};1,\textbf{3},7,8,9;\textbf{5})$ \\ \\
Gapless $R$-tuple & $\gamma \in UG_R(n)$ & $(2,4,6;4,5,6,7,9;9)$ & $(2,4,6;\textbf{4},\textbf{6},7,8,9;9)$ \\ \\
\end{tabular}
\caption*{Table 2.1.  (Non-)Examples of R-tuples for $n = 9$ and $R = \{3,8\}$.}
\end{center}
\end{figure}

Next we consider $R$-increasing tuples with the following property:  Whenever there is a descent across a divider between carrels, then no ``gaps'' can occur until the increasing entries in the new carrel ``catch up''.  So we define a \emph{gapless $R$-tuple} to be an $R$-increasing upper tuple $\gamma$ such that whenever there exists $h \in [r]$ with $\gamma_{q_h} > \gamma_{q_h+1}$, then $s := \gamma_{q_h} - \gamma_{q_h+1} + 1 \leq p_{h+1}$ and the first $s$ entries of the $(h+1)^{st}$ carrel $(q_h, q_{h+1} ]$ are $\gamma_{q_h}-s+1, \gamma_{q_h}-s+2, ... , \gamma_{q_h}$.  The failure in Table 2.1 occurs because the absence of the element $5 \in [9]$ from the second carrel creates a gap.  Let $UG_R(n) \subseteq UI_R(n)$ denote the set of gapless $R$-tuples.  Note that a gapless $\gamma$ has $\gamma_{q_1} \leq \gamma_{q_2} \leq ... \leq \gamma_{q_r} \leq \gamma_{q_{r+1}}$.  So in the full $R = [n-1]$ case, each gapless $R$-tuple is a flag.  Hence $UG_{[n-1]}(n) = UF_{[n-1]}(n)$.

An $R$-\textit{chain} $B$ is a sequence of sets $\emptyset =: B_0 \subset B_1 \subset \ldots \subset B_r \subset B_{r+1} := [n]$ such that $|B_h| = q_h$ for $h \in [r]$.  A bijection from $R$-permutations $\pi$ to $R$-chains $B$ is given by $B_h := \{\pi_1, \pi_2, \ldots, \pi_{q_h}\}$ for $h \in [r]$.  We indicate it by $\pi \mapsto B$.  The $R$-chains for the two $R$-permutations appearing in Table 2.1 are $\emptyset \subset \{2, 3, 6\} \subset \{ 1,2,3,4,5,6,8,9 \} \subset [9]$ and $\emptyset \subset \{2, 4, 6\} \subset \{1,2,3,4,6,7,8,9\} \subset [9]$.  Fix an $R$-permutation $\pi$ and let $B$ be the corresponding $R$-chain.  For $h \in [r+1]$, the set $B_h$ is the union of the first $h$ cohorts of $\pi$.  Note that $R$-chains $B$ (and hence $R$-permutations $\pi$) are equivalent to the $\binom{n}{R}$ objects that could be called ``ordered $R$-partitions of $[n]$''; these arise as the sequences $(B_1 \backslash B_0, B_2\backslash B_1, \ldots, B_{r+1}\backslash B_r)$ of $r+1$ disjoint nonempty subsets of sizes $p_1, p_2, \ldots, p_{r+1}$.  Now create an $R$-tuple $\Psi_R(\pi) =: \psi$ as follows:  For $h \in [r+1]$ specify the entries in its $h^{th}$ carrel by $\psi_i := \text{rank}^{q_h-i+1}(B_h)$ for $i \in (q_{h-1},q_h]$.  For a model, imagine there are $n$ discus throwers grouped into $r+1$ heats of $p_h$ throwers for $h \in [r+1]$.  Each thrower gets one throw, the throw distances are elements of $[n]$, and there are no ties.  After the $h^{th}$ heat has been completed, the $p_h$ longest throws overall so far are announced in ascending order.  See Table 2.2.  We call $\psi$ the \emph{rank $R$-tuple of $\pi$}.  As well as being $R$-increasing, it can be seen that $\psi$ is upper:  So $\psi \in UI_R(n)$.

\vspace{.125in}

\begin{figure}[h!]
\begin{center}
\begin{tabular}{lccc}
\underline{Name} & \underline{From/To} & \underline{Input} & \underline{Image} \\ \\
Rank $R$-tuple & $\Psi_R:  S_n^R \rightarrow  UI_R(n)$ & $(2,4,6;1,5,7,8,9;3)$   & $(2,4,6;5,6,7,8,9;9)$ \\ \\
Undoes $\Psi_R|_{S_n^{R\text{-}312}}$ & $\Pi_R:  UG_R(n) \rightarrow S_n^{R\text{-}312}$ & $(2,4,6;4,5,6,7,9;9)$  & $(2,4,6;1,3,5,7,9;8)$ \\ \\
\end{tabular}
\caption*{Table 2.2.  Examples for maps of $R$-tuples for $n = 9$ and $R = \{3, 8 \}$.}
\end{center}
\end{figure}

The map $\Psi_R$ is not injective; for example it maps another $R$-permutation $(2,4,6;3,5,7,8,9;1)$ to the same image as in Table 2.2.  In Proposition \ref{prop320.2}(ii) it will be seen that the restriction of $\Psi_R$ to $S_n^{R\text{-}312}$ is a bijection to $UG_R(n)$ whose inverse is the following map $\Pi_R$:  Let $\gamma \in UG_R(n)$.  See Table 2.2.  Define an $R$-tuple $\Pi_R(\gamma) =: \pi$ by:  Initialize $\pi_i := \gamma_i$ for $i \in (0,q_1]$.  Let $h \in [r]$.  If $\gamma_{q_h} > \gamma_{q_h+1}$, set $s:= \gamma_{q_h} - \gamma_{q_h+1} + 1$.  Otherwise set $s := 0$.  For $i$ in the right side $(q_h + s, q_{h+1}]$ of the $(h+1)^{st}$ carrel, set $\pi_i := \gamma_i$.  For $i$ in the left side $(q_h, q_h + s]$, set $d := q_h + s - i + 1$ and $\pi_i := rank^d( \hspace{1mm} [\gamma_{q_h}] \hspace{1mm} \backslash \hspace{1mm} \{ \pi_1, ... , \pi_{q_h} \} \hspace{1mm} )$.  In words:  working from right to left, fill in the left side by finding the largest element of $[\gamma_{q_h}]$ not used by $\pi$ so far, then the next largest, and so on.  In Table 2.2 when $h=1$ the elements $5,3,1$ are found and placed into the $6^{th}, 5^{th}$, and $4^{th}$ positions.  (Since $\gamma$ is a gapless $R$-tuple, when $s \geq 1$ we have $\gamma_{q_h + s} = \gamma_{q_h}$.  Since `gapless' includes the upper property, here we have $\gamma_{q_h +s} \geq q_h + s$.  Hence $| \hspace{1mm} [\gamma_{q_h}] \hspace{1mm} \backslash \hspace{1mm} \{ \pi_1, ... , \pi_{q_h} \} \hspace{1mm} | \geq s$, and so there are enough elements available to define these left side $\pi_i$. )  Since $\gamma_{q_h} \leq \gamma_{q_{h+1}}$, it can inductively be seen that $\max\{ \pi_1, ... , \pi_{q_h} \} = \gamma_{q_h}$.

When we restrict our attention to the full $R = [n-1]$ case, we will suppress most prefixes and subscripts of `$R$'.  Two examples of this are:  an $[n-1]$-chain becomes a \emph{chain}, and one has $UF(n) = UG(n)$.

\section{Shapes, tableaux, connections to Lie theory}

A \emph{partition} is an $n$-tuple $\lambda \in \mathbb{Z}^n$ such that $\lambda_1 \geq \ldots \geq \lambda_n \geq 0$.  Fix such a $\lambda$ for the rest of the paper.  We say it is \textit{strict} if $\lambda_1 > \ldots > \lambda_n$.  The \textit{shape} of $\lambda$, also denoted $\lambda$, consists of $n$ left justified rows with $\lambda_1, \ldots, \lambda_n$ boxes.  We denote its column lengths by $\zeta_1 \geq \ldots \geq \zeta_{\lambda_1}$.  The column length $n$ is called the \emph{trivial} column length.  Since the columns are more important than the rows, the boxes of $\lambda$ are transpose-indexed by pairs $(j,i)$ such that $1 \leq j \leq \lambda_1$ and $1 \leq i \leq \zeta_j$.  Sometimes for boundary purposes we refer to a $0^{th}$ \emph{latent column} of boxes, which is a prepended $0^{th}$ column of trivial length.  If $\lambda = 0$, its shape is the \textit{empty shape} $\emptyset$.  Define $R_\lambda \subseteq [n-1]$ to be the set of distinct non-trivial column lengths of $\lambda$.  Note that $\lambda$ is strict if and only if $R_\lambda = [n-1]$, i.e. $R_\lambda$ is full.  Set $|\lambda| := \lambda_1 + \ldots + \lambda_n$.

A \textit{(semistandard) tableau of shape $\lambda$} is a filling of $\lambda$ with values from $[n]$ that strictly increase from north to south and weakly increase from west to east.  The example tableau below for $n = 12$ has shape $\lambda = (7^5, 5^4, 2^2)$.  Here $R_\lambda = \{5, 9, 11 \}$.  Let $\mathcal{T}_\lambda$ denote the set of tableaux $T$ of shape $\lambda$.  Under entrywise comparison $\leq$, this set $\mathcal{T}_\lambda$ becomes a poset that is the distributive lattice $L(\lambda, n)$ introduced by Stanley.  The principal ideals $[T]$ in $\mathcal{T}_\lambda$ are clearly convex polytopes in $\mathbb{Z}^{|\lambda|}$.  Fix $T \in \mathcal{T}_\lambda$.  For $j \in [\lambda_1]$, we denote the one column ``subtableau'' on the boxes in the $j^{th}$ column by $T_j$.  Here for $i \in [\zeta_j]$ the tableau value in the $i^{th}$ row is denoted $T_j(i)$.  The set of values in $T_j$ is denoted $B(T_j)$.  Columns $T_j$ of trivial length must be \emph{inert}, that is $B(T_j) = [n]$.  The $0^{th}$ \textit{latent column} $T_0$ is an inert column that is sometimes implicitly prepended to the tableau $T$ at hand:  We ask readers to refer to its values as needed to fulfill definitions or to finish constructions.  We say a tableau $Y$ of shape $\lambda$ is a $\lambda$-\textit{key} if $B(Y_l) \supseteq B(Y_j)$ for $1 \leq l \leq j \leq \lambda_1$.  The example tableau below is a $\lambda$-key.  The empty shape has one tableau on it, the \textit{null tableau}.  Fix a set $Q \subseteq [n]$ with $|Q| =: q \geq 0$.  The \textit{column} $Y(Q)$ is the tableau on the shape for the partition $(1^q, 0^{n-q})$ whose values form the set $Q$.  Then for $d \in [q]$, the value in the $(q+1-d)^{th}$ row of $Y(Q)$ is $rank^d(Q)$.

\begin{figure}[h!]
\begin{center}
\ytableausetup{boxsize = 1.5em}
$$
\begin{ytableau}
1 & 1 & 1 & 1 & 1 & 1 & 1\\
2 & 2 & 3 & 3 & 3 & 4 & 4\\
3 & 3 & 4 & 4 & 4 & 6 & 6\\
4 & 4 & 5 & 5 & 5 & 7 & 7\\
5 & 5 & 6 & 6 & 6 & 10 & 10\\
6 & 6 & 7 & 7 & 7\\
7 & 7 & 8 & 8 & 8\\
8 & 8 & 9 & 9 & 9\\
9 & 9 & 10 & 10 & 10\\
10 & 10 \\
12 & 12
\end{ytableau}$$
\end{center}
\end{figure}

The most important values in a tableau of shape $\lambda$ occur at the ends of its rows.  Using the latent column when needed, these $n$ values from $[n]$ are gathered into an $R_\lambda$-tuple as follows:  Let $T \in \mathcal{T}_\lambda$.  We define the \textit{$\lambda$-row end list} $\omega$ of $T$ to be the $R_\lambda$-tuple given by $\omega_i := T_{\lambda_i}(i)$ for $i \in [n]$.  Note that for $h \in [r+1]$, down the $h^{th}$ ``cliff'' from the right in the shape of $\lambda$ one has $\lambda_i = \lambda_{i^\prime}$ for $i, i^\prime \in (q_{h-1}, q_{h} ]$.  In the example take $h = 2$.  Then $q_2 = 9$ and $q_1 = 5$.  Here $\lambda_i = 5 = \lambda_{i'}$ for $i, i' \in (5,9]$.  Reading off the values of $T$ down that cliff produces the $h^{th}$ cohort of $\omega$.  Here this cohort of $\omega$ is $\{7, 8, 9, 10\}$.  These values are increasing.  So $\omega \in UI_{R_\lambda}(n)$.

For $h \in [r]$, the columns of length $q_h$ in the shape $\lambda$ have indices $j$ such that $j \in (\lambda_{q_{h+1}}, \lambda_{q_h}]$.  When $h=2$ we have $j \in (\lambda_{11}, \lambda_9] = (2,5]$ for columns of length $q_2 = 9$.  A bijection from $R$-chains $B$ to $\lambda$-keys $Y$ is obtained by juxtaposing from left to right $\lambda_n$ inert columns and $\lambda_{q_h}-\lambda_{q_{h+1}}$ copies of $Y(B_h)$ for $r \geq h \geq 1$.   We indicate it by $B \mapsto Y$.  For $h = 2$ here there are $\lambda_9 - \lambda_{11} = 5-2 = 3$ copies of $Y(B_2)$ with $B_2 = (0, 10] \backslash \{2\}$.  Unfortunately we need to have the indices $h$ of the column lengths $q_h$ decreasing from west to east while the column indices $j$ increase from west to east.  Hence the elements of $B_{h+1}$ form the column $Y_j$ for $j = \lambda_{q_{h+1}}$ while the elements of $B_h$ form $Y_{j+1}$.  A bijection from $R_\lambda$-permutations $\pi$ to $\lambda$-keys $Y$ is obtained by following $\pi \mapsto B$ with $B \mapsto Y$.  The image of an $R_\lambda$-permutation $\pi$ is called the \emph{$\lambda$-key of $\pi$}; it is denoted $Y_\lambda(\pi)$.  The example tableau is the $\lambda$-key of $\pi = (1,4,6,7,10; 3,5,8,9; 2, 12; 11)$.  It is easy to see that the $\lambda$-row end list of the $\lambda$-key of $\pi$ is the rank $R_\lambda$-tuple $\Psi_{R_\lambda}(\pi) =: \psi$ of $\pi$:  Here $\psi_i = Y_{\lambda_i}(i)$ for $i \in [n]$.

Let $\alpha \in UI_{R_\lambda}(n)$.  Define $\mathcal{Z}_\lambda(\alpha)$ to be the subset of tableaux $T \in \mathcal{T}_\lambda$ with $\lambda$-row end list $\alpha$.  To see that $\mathcal{Z}_\lambda(\alpha) \neq \emptyset$, for $i \in [n]$ take $T_j(i) := i$ for $j \in [1, \lambda_i)$ and $T_{\lambda_i}(i) := \alpha_i$.  This subset is closed under the join operation for the lattice $\mathcal{T}_\lambda$.  We define the \emph{$\lambda$-row end max tableau $M_\lambda(\alpha)$ for $\alpha$} to be the unique maximal element of $\mathcal{Z}_\lambda(\alpha)$.  The example tableau is an $M_\lambda(\alpha)$.

When we are considering tableaux of shape $\lambda$, much of the data used will be in the form of $R_\lambda$-tuples.  Many of the notions used will be definitions from Section 2 that are being applied with $R := R_\lambda$.  The structure of each proof will depend only upon $R_\lambda$ and not upon how many times a column length is repeated:  If $\lambda^\prime, \lambda^{\prime\prime} \in \Lambda_n^+$ are such that $R_{\lambda^\prime} = R_{\lambda^{\prime\prime}}$, then the development for $\lambda^{\prime\prime}$ will in essence be the same as for $\lambda^\prime$.  To emphasize the original independent entity $\lambda$ and to reduce clutter, from now on rather than writing `$R$' or `$R_\lambda$' we will replace `$R$' by `$\lambda$' in subscripts and in prefixes.  Above we would have written $\omega \in UI_\lambda(n)$ instead of having written $\omega \in UI_{R_\lambda}(n)$ (and instead of having written $\omega \in UI_R(n)$ after setting $R := R_\lambda$).  When $\lambda$ is a strict partition, we omit most `$\lambda$-' prefixes and subscripts since $R_\lambda = [n-1]$.

To connect to Lie theory, fix $R \subseteq [n-1]$ and set $J := [n-1] \backslash R$.  The $R$-permutations are the one-rowed forms of the ``inverses'' of the minimum length representatives collected in $W^J$ for the cosets in $W /W_J$, where $W$ is the Weyl group of type $A_{n-1}$ and $W_J$ is its parabolic subgroup $\langle s_i: i \in J \rangle$.  A partition $\lambda$ is strict exactly when the weight it depicts for $GL(n)$ is strongly dominant.  If we take the set $R$ to be $R_\lambda$, then the restriction of the partial order $\leq$ on $\mathcal{T}_\lambda$ to the $\lambda$-keys depicts the Bruhat order on that $W^J$.  Further details appear in Sections 2, 3, and the appendix of \cite{PW1}.

\section{Rightmost clump deleting chains, gapless $\mathbf{\emph{R}}$-tuples}

We show that if the domain of the simple-minded global bijection $\pi \mapsto B$ is restricted to $S_n^{R\text{-}312} \subseteq S_n^R$, then a bijection to a certain set of chains results.  And while it appears to be difficult to characterize the image $\Psi_R(S_n^R) \subseteq UI_R(n)$ of the $R$-rank map for general $R$, we show that restricting $\Psi_R$ to $S_n^{R\text{-}312}$ produces a bijection to the set $UG_R(n)$ of gapless $R$-tuples.

Given a set of integers, a \emph{clump} of it is a maximal subset of consecutive integers.  After decomposing a set into its clumps, we index the clumps in the increasing order of their elements.  For example, the set $\{ 2,3,5,6,7,10,13,14 \}$ is the union $L_1 \hspace{.5mm} \cup  \hspace{.5mm} L_2  \hspace{.5mm} \cup \hspace{.5mm}  L_3  \hspace{.5mm} \cup  \hspace{.5mm} L_4$,  where  $L_1 := \{ 2,3 \},  L_2 := \{ 5,6,7 \},$ $L_3 := \{ 10 \},  L_4 := \{ 13,14 \}$.

For the first part of this section we temporarily work in the context of the full $R = [n-1]$ case.  A chain  $B$  is \textit{rightmost clump deleting} if for $h \in [n-1]$ the element deleted from each $B_{h+1}$ to produce $B_h$ is chosen from the rightmost clump of $B_{h+1}$.  More formally:   It is rightmost clump deleting if for  $h \in [n-1]$  one has $B_{h} = B_{h+1} \backslash \{ b \}$  only when  $[b, m] \subseteq B_{h+1}$,  where  $m := max (B_{h+1})$.  For $n = 3$ there are five rightmost clump deleting chains, whose sets $B_3 \supset B_2 \supset B_1$ are displayed from the top in three rows:

\begin{figure}[h!]
\begin{center}
\setlength\tabcolsep{.1cm}
\begin{tabular}{ccccc}
1& &2& &\cancel{3}\\
 &1& &\cancel{2}& \\
 & &\cancel{1}& & \\
\end{tabular}\hspace{10mm}
\begin{tabular}{ccccc}
1& &2& &\cancel{3}\\
 &\cancel{1}& &2& \\
 & &\cancel{2}& & \\
\end{tabular}\hspace{10mm}
\begin{tabular}{ccccc}
1& &\cancel{2}& &3\\
 &1& &\cancel{3}& \\
 & &\cancel{1}& & \\
\end{tabular}\hspace{10mm}
\begin{tabular}{ccccc}
\cancel{1}& &2& &3\\
 &2& &\cancel{3}& \\
 & &\cancel{2}& & \\
\end{tabular}\hspace{10mm}
\begin{tabular}{ccccc}
\cancel{1}& &2& &3\\
 &\cancel{2}& &3& \\
 & &\cancel{3}& & \\
\end{tabular}
\end{center}
\end{figure}

\noindent To form the corresponding $\pi$, record the deleted elements from bottom to top.  Note that the 312-containing permutation $(3;1;2)$ does not occur.  Its triangular display of $B_3 \supset B_2 \supset B_1$ deletes the `1' from the ``left'' clump in the second row.

After Part (0) restates the definition of this concept, we present four reformulations of it:

\begin{fact}\label{fact320.1}Let $B$ be a chain.  Set $\{ b_{h+1} \} := B_{h+1} \backslash B_h$ for $h \in [n-1]$.  Set $m_h := \max (B_h)$ for $h \in [n]$.  The following conditions are equivalent to this chain being rightmost clump deleting:

\noindent(0)  For $h \in [n-1]$, one has $[b_{h+1}, m_{h+1}] \subseteq B_{h+1}$.

\noindent(i)  For $h \in [n-1]$, one has $[b_{h+1}, m_h] \subseteq B_{h+1}$.

\noindent(ii)  For $h \in [n-1]$, one has $(b_{h+1}, m_h) \subset B_h$.

\noindent(iii)  For $h \in [n-1]$:  If $b_{h+1} < m_h$, then $b_{h+1} = \max([m_h] \backslash B_h)$.

\noindent(iii$^\prime$)  For $h \in [n-1]$, one has $b_{h+1} = \max([m_{h+1}] \backslash B_h)$. \end{fact}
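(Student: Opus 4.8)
The plan is to fix $h \in [n-1]$ and prove that, for this single $h$, all five displayed statements are equivalent; since each of the five conditions is the conjunction of its per-$h$ instances over $h \in [n-1]$, equivalence of the conditions for the whole chain follows at once. The one structural fact I would record at the outset is that $B_{h+1} = B_h \cup \{b_{h+1}\}$ with $b_{h+1} \notin B_h$ while $m_h \in B_h$; hence $b_{h+1} \neq m_h$ and $m_{h+1} = \max\{m_h, b_{h+1}\}$. This dichotomy lets me split into two mutually exclusive cases: the \emph{ascent case} $b_{h+1} > m_h$, in which $b_{h+1}$ is a new maximum and $m_{h+1} = b_{h+1}$, and the \emph{descent case} $b_{h+1} < m_h$, in which $m_{h+1} = m_h$.

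In the ascent case I expect every one of the five conditions to hold automatically, so there is nothing to prove. Condition (0) reduces to $\{b_{h+1}\} \subseteq B_{h+1}$ and condition (i) to the containment of the empty interval $[b_{h+1}, m_h] = \emptyset$, both vacuous; condition (ii) concerns the empty open interval $(b_{h+1}, m_h)$; the hypothesis $b_{h+1} < m_h$ of (iii) fails, so (iii) is vacuously true; and (iii$^\prime$) reduces to $b_{h+1} = \max([b_{h+1}] \backslash B_h)$, which is immediate since $b_{h+1}$ is the largest element of $[b_{h+1}]$ and lies outside $B_h$.

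The content is in the descent case, where $m_{h+1} = m_h$ and I would show that all five conditions collapse to the single assertion $(b_{h+1}, m_h) \subseteq B_h$. First, since $m_{h+1} = m_h$, conditions (0) and (i) coincide verbatim, as do (iii) and (iii$^\prime$). Next, the endpoints $b_{h+1}$ and $m_h$ of the closed interval in (i) already lie in $B_{h+1}$, so (i) is equivalent to $(b_{h+1}, m_h) \subseteq B_{h+1}$; and because the open interval omits $b_{h+1}$, intersecting with $B_{h+1} = B_h \cup \{b_{h+1}\}$ shows this is the same as $(b_{h+1}, m_h) \subseteq B_h$, which is (ii). Finally, the crux is the equivalence $b_{h+1} = \max([m_h] \backslash B_h) \iff (b_{h+1}, m_h) \subseteq B_h$: here one uses that $b_{h+1} \in [m_h] \backslash B_h$ (since $b_{h+1} < m_h$ and $b_{h+1} \notin B_h$) together with $m_h \in B_h$, so that the absence of any element of $[m_h] \backslash B_h$ strictly above $b_{h+1}$ is exactly the fullness statement $(b_{h+1}, m_h) \subseteq B_h$. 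This gives (ii) $\iff$ (iii), completing the cycle.

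I do not anticipate a serious obstacle: the argument is a two-case analysis plus careful bookkeeping of open versus closed intervals. The one point that must be handled cleanly is keeping the two cases straight so that the vacuous and empty-interval instances of the ascent case are not mistaken for failures, and the observation that $b_{h+1}$ is itself a member of $[m_h] \backslash B_h$, which is precisely what makes the ``rightmost'' maximality in (iii) and (iii$^\prime$) equivalent to the fullness of the intervening interval in (ii).
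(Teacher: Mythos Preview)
Your proof is correct. The paper states this result as a ``Fact'' and supplies no proof at all, so there is nothing to compare; your two-case analysis (ascent $b_{h+1} > m_h$ versus descent $b_{h+1} < m_h$) and the reduction of all five per-$h$ statements in the descent case to the single assertion $(b_{h+1}, m_h) \subseteq B_h$ is exactly the kind of routine verification the authors evidently expected the reader to supply.
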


The following characterization is related to Part (ii) of the preceeding fact via the correspondence $\pi \longleftrightarrow B$:

\begin{fact}\label{fact320.2}A permutation $\pi$ is 312-avoiding if and only if for every $h \in [n-1]$ we have \\ $(\pi_{h+1}, \max\{\pi_1, ... , \pi_{h}\}) \subset \{ \pi_1, ... , \pi_{h} \}$. \end{fact}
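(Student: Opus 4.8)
The plan is to prove the two directions by contraposition, after first noting that the displayed condition is nothing but condition (ii) of Fact~\ref{fact320.1} read through the bijection $\pi \longleftrightarrow B$. Indeed, under $B_h = \{\pi_1, \ldots, \pi_h\}$ one has $b_{h+1} = \pi_{h+1}$ and $m_h := \max(B_h) = \max\{\pi_1, \ldots, \pi_h\}$, so ``$(b_{h+1}, m_h) \subset B_h$ for all $h \in [n-1]$'' becomes precisely ``$(\pi_{h+1}, \max\{\pi_1, \ldots, \pi_h\}) \subset \{\pi_1, \ldots, \pi_h\}$ for all $h \in [n-1]$.'' Thus the statement amounts to asserting that $\pi$ is $312$-avoiding if and only if its chain $B$ is rightmost clump deleting, and I would prove this equivalence directly. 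Throughout write $m_h := \max\{\pi_1, \ldots, \pi_h\}$.

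First I would show that if the condition fails then $\pi$ is $312$-containing. Suppose it fails at some $h \in [n-1]$: there is an integer $v$ with $\pi_{h+1} < v < m_h$ and $v \notin \{\pi_1, \ldots, \pi_h\}$. Let $a \le h$ be the position with $\pi_a = m_h$, and let $c$ be the position with $\pi_c = v$. Since $v \notin \{\pi_1, \ldots, \pi_h\}$ we have $c > h$, and since $v > \pi_{h+1}$ forces $v \ne \pi_{h+1}$, we in fact get $c \ge h+2$. Taking $b := h+1$ then yields $a < b < c$ with $\pi_a = m_h > v = \pi_c > \pi_{h+1} = \pi_b$, i.e. $\pi_a > \pi_b < \pi_c$ and $\pi_a > \pi_c$, a $312$-pattern.

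For the converse I would show that a $312$-pattern forces the condition to fail. Given indices $a < b < c$ with $\pi_a > \pi_c > \pi_b$, set $h := b - 1$, so $\pi_{h+1} = \pi_b$. Because $a \le h$ we have $m_h \ge \pi_a > \pi_c$, while $\pi_c > \pi_b = \pi_{h+1}$; hence $\pi_c$ is an integer strictly inside the open interval $(\pi_{h+1}, m_h)$. But $c > b > h$, so $\pi_c \notin \{\pi_1, \ldots, \pi_h\}$, and the inclusion $(\pi_{h+1}, m_h) \subset \{\pi_1, \ldots, \pi_h\}$ fails at this $h$.

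The argument is essentially a routine translation between the ``missing value in an initial segment'' condition and the $312$-pattern, so I do not expect a genuine obstacle. The only point requiring care is the index bookkeeping in the first implication: one must check that the witness $v$ occupies a position $c \ge h+2$, strictly past the middle index $b = h+1$, so that $a, b, c$ are three genuinely distinct increasing positions rather than collapsing. Everything else follows from $\pi$ being a permutation, which makes ``$v \notin \{\pi_1, \ldots, \pi_h\}$'' interchangeable with ``$v$ occupies a position after $h$.''
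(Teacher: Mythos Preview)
Your proof is correct and follows the approach the paper itself indicates: the paper does not give an explicit proof of Fact~\ref{fact320.2}, but simply remarks that it ``is related to Part~(ii) of the preceding fact via the correspondence $\pi \longleftrightarrow B$,'' leaving the verification to the reader. You carry out exactly this verification, first identifying the displayed condition with Fact~\ref{fact320.1}(ii) under $b_{h+1} = \pi_{h+1}$, $m_h = \max\{\pi_1,\ldots,\pi_h\}$, and then proving the equivalence with $312$-avoidance by clean contrapositions in both directions; the bookkeeping (in particular the check that $c \ge h+2$) is handled correctly.
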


Since the following result will be generalized by Proposition \ref{prop320.2}, we do not prove it here.  Part (i) is Exercise 2.202 of \cite{Sta}.

\begin{prop}\label{prop320.1}For the full $R = [n-1]$ case we have:

\noindent (i)  The restriction of the global bijection $\pi \mapsto B$ from $S_n$ to $S_n^{312}$ is a bijection to the set of rightmost clump deleting chains.  Hence there are $C_n$ rightmost clump deleting chains.

\noindent (ii)  The restriction of the rank tuple map $\Psi$ from $S_n$ to $S_n^{312}$ is a bijection to $UF(n)$ whose inverse is $\Pi$.  \end{prop}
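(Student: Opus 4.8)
The plan is to prove the two parts separately: part (i) reduces immediately to the two preceding Facts, while part (ii) exhibits $\Pi$ as the two-sided inverse of the restriction of $\Psi$.

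For part (i), recall that the global map $\pi \mapsto B$ is already a bijection from $S_n$ onto the set of all complete chains, so it suffices to check that, under this correspondence, $\pi$ is $312$-avoiding exactly when $B$ is rightmost clump deleting. In the full case $\pi \mapsto B$ gives $b_{h+1} = \pi_{h+1}$, $\ m_h = \max\{\pi_1,\ldots,\pi_h\}$, and $B_h = \{\pi_1,\ldots,\pi_h\}$ for each $h \in [n-1]$. Hence the condition $(b_{h+1}, m_h) \subset B_h$ of Fact \ref{fact320.1}(ii) is literally the condition $(\pi_{h+1}, \max\{\pi_1,\ldots,\pi_h\}) \subset \{\pi_1,\ldots,\pi_h\}$ of Fact \ref{fact320.2}. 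Since the former characterizes rightmost clump deleting chains and the latter characterizes $312$-avoidance, the restriction of $\pi \mapsto B$ to $S_n^{312}$ is a bijection onto the rightmost clump deleting chains; the count then follows from $|S_n^{312}| = C_n$.

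For part (ii), I would first record that in the full case every carrel is a singleton, so the defining formula for $\Psi$ collapses to $\Psi(\pi)_h = \mathrm{rank}^1(B_h) = \max\{\pi_1,\ldots,\pi_h\}$: the image is the running-maximum tuple of $\pi$. This tuple is weakly increasing and satisfies $\Psi(\pi)_h \geq h$ because $\{\pi_1,\ldots,\pi_h\}$ consists of $h$ distinct values in $[n]$; hence $\Psi(S_n) \subseteq UF(n)$, and in particular $\Psi(S_n^{312}) \subseteq UF(n)$. The heart of the argument is that $\Pi$ inverts $\Psi$ on $S_n^{312}$. I would prove $\Pi(\Psi(\pi)) = \pi$ for each $312$-avoiding $\pi$ by reading off $\gamma := \Psi(\pi)$ from left to right: at an index where a new running maximum is attained, $\pi$ agrees with $\gamma$, matching the right-side assignment of $\Pi$; at an index that is not a new maximum, Fact \ref{fact320.2} forces every integer strictly between $\pi_{h+1}$ and $m_h$ to have already appeared, so $\pi_{h+1}$ is the largest element of $[m_h]$ not yet used, which is exactly the value the rank procedure in $\Pi$ selects. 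This shows $\Psi|_{S_n^{312}}$ is injective. Because $|S_n^{312}| = C_n = |UF(n)|$, injectivity between equinumerous finite sets upgrades to a bijection, and the left inverse $\Pi$ is then automatically the two-sided inverse; in particular $\Pi$ indeed maps $UF(n)$ into $S_n^{312}$.

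I expect the main obstacle to be the clean execution of this reconstruction step: verifying that a $312$-avoiding permutation is recovered from its running maxima by precisely the rule $\Pi$ uses, and controlling the maxima that $\Pi$ produces. The key bookkeeping is the inductive claim noted when $\Pi$ was defined, namely $\max\{\pi_1,\ldots,\pi_h\} = \gamma_h$ at each stage, together with the observation that the rank procedure always draws from previously unused values so that $\Pi(\gamma)$ has distinct entries. Invoking the Catalan count to pass from injectivity to bijectivity sidesteps a direct surjectivity argument and confines the delicate work to the single step above; alternatively, one could verify $\Psi(\Pi(\gamma)) = \gamma$ directly via that same inductive claim, which also re-proves well-definedness of $\Pi$ as a map into $S_n^{312}$.
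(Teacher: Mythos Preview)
Your argument is correct. The paper itself does not prove this proposition directly; it says ``Since the following result will be generalized by Proposition~\ref{prop320.2}, we do not prove it here,'' and the proof of Proposition~\ref{prop320.2} specialises to give the statement. For part~(i) your reasoning via Facts~\ref{fact320.1}(ii) and~\ref{fact320.2} is exactly that specialisation.

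For part~(ii) there is a genuine, if modest, difference in route. The paper's proof of Proposition~\ref{prop320.2}(ii) establishes the bijection by checking \emph{both} composites: it first shows that $\Pi_R(\gamma)$ is an $R$-permutation and is $R$-312-avoiding (via Fact~\ref{fact320.3}(iii) and Fact~\ref{fact320.4}), then verifies $\Psi_R\circ\Pi_R=\mathrm{id}$ for surjectivity and $\Pi_R\circ\Psi_R=\mathrm{id}$ for injectivity. You instead verify only $\Pi\circ\Psi=\mathrm{id}$ on $S_n^{312}$ and then invoke the equality $|S_n^{312}|=C_n=|UF(n)|$ (recorded in Section~2) to promote injectivity to bijectivity; the fact that $\Pi$ lands in $S_n^{312}$ then falls out for free. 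This cardinality shortcut is perfectly valid here, and it localises all the work in the single reconstruction step you describe. Its cost is that it does not transport to general $R$: there one has no a~priori equality $|S_n^{R\text{-}312}|=|UG_R(n)|$ to appeal to, which is why the paper's proof of Proposition~\ref{prop320.2} must verify both composites and must separately argue that $\Pi_R(\gamma)$ is $R$-312-avoiding. Your closing remark already anticipates this alternative.
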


\noindent Here when $R = [n-1]$, the map $\Pi : UF(n) \longrightarrow S_n^{312}$ has a simple description.  It was introduced in \cite{PS} for Theorem 14.1.  Given an upper flag $\phi$, recursively construct $\Pi(\phi) =: \pi$ as follows:  Start with $\pi_1 := \phi_1$.  For $i \in [n-1]$, choose $\pi_{i+1}$ to be the maximum element of $[\phi_{i+1}] \backslash \{ \pi_1, ... , \pi_{i} \}$.

Now fix $R \subseteq [n-1]$.  Let $B$ be an $R$-chain.  More generally, we say $B$ is \textit{$R$-rightmost clump deleting} if this condition holds for each $h \in [r]$:  Let $B_{h+1} =: L_1 \cup L_2 \cup ... \cup L_f$ decompose $B_{h+1}$ into clumps for some $f \geq 1$.  We require $L_e \cup L_{e+1} \cup ... \cup L_f \supseteq B_{h+1} \backslash B_{h} \supseteq L_{e+1} \cup ... \cup L_f$ for some $e \in [f]$.  This condition requires the set $B_{h+1} \backslash B_h$ of new elements that augment the set $B_h$ of old elements to consist of entirely new clumps $L_{e+1}, L_{e+2}, ... , L_f$, plus some further new elements that combine with some old elements to form the next ``lower'' clump $L_e$ in $B_{h+1}$.  When $n = 14$ and $R = \{3, 5, 10 \}$, an example of an $R$-rightmost clump deleting chain is given by $\emptyset \subset \{ 1 \text{-} 2, 6 \} \subset \{ 1\text{-}2, 5\text{-}6, 8 \} \subset \{ 1\text{-}2, 4\text{-}5\text{-}6\text{-}7\text{-}8, 10, 13\text{-}14 \}$ $\subset \{1\text{-}2\text{-}3\text{-}...\text{-}13\text{-}14\}$.  Here are some reformulations of the notion of $R$-rightmost clump deleting:

\begin{fact}\label{fact320.3}Let $B$ be an $R$-chain.  For $h \in [r]$, set $b_{h+1} := \min (B_{h+1} \backslash B_{h} )$ and $m_h := \max (B_h)$.  The following conditions are equivalent to this chain being $R$-rightmost clump deleting:

\noindent (i)  For $h \in [r]$, one has $[b_{h+1}, m_{h}] \subseteq B_{h+1}$.

\noindent (ii)  For $h \in [r]$, one has $(b_{h+1}, m_{h}) \subset B_{h+1}$.

\noindent (iii)  For $h \in [r]$, let $s$ be the number of elements of $B_{h+1} \backslash B_{h}$ that are less than $m_{h}$.  These must be the $s$ largest elements of $[m_{h}] \backslash B_{h}$.  \end{fact}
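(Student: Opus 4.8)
The plan is to fix $h \in [r]$ throughout and prove the single-index equivalences, since each stated condition and the definition of $R$-rightmost clump deleting is a conjunction of such conditions over $h$. Abbreviate $A := B_h$ and $C := B_{h+1}$, and recall $b := b_{h+1} = \min(C \setminus A)$ and $m := m_h = \max(A)$. Because $|A| = q_h < q_{h+1} = |C|$ we have $A \subsetneq C$, so $b$ is well defined; moreover $m \in A \subseteq C$ and $b \in C \setminus A$, whence $b \neq m$. The equivalence of (i) and (ii) is then immediate: if $b > m$ both $[b,m]$ and $(b,m)$ are empty, while if $b < m$ the endpoints $b$ and $m$ already lie in $C$, so $[b,m] \subseteq C$ holds exactly when $(b,m) \subseteq C$ does. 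It therefore remains to tie the definition to (i) and to (iii).

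First I would recast the definition in terms of the clump decomposition $C = L_1 \cup \cdots \cup L_f$, with clumps indexed increasingly. Calling a clump \emph{old-meeting} if it meets $A$ and \emph{new-meeting} if it meets $C \setminus A$, I claim the existence of an index $e$ with $L_e \cup \cdots \cup L_f \supseteq C \setminus A \supseteq L_{e+1} \cup \cdots \cup L_f$ is equivalent to the absence of any pair $i < j$ with $L_i$ new-meeting and $L_j$ old-meeting: the first inclusion says $L_1 \cup \cdots \cup L_{e-1} \subseteq A$ and the second says $L_{e+1} \cup \cdots \cup L_f \subseteq C \setminus A$, so a separating $e$ exists precisely when no new-meeting clump sits below an old-meeting one. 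Letting $p$ be the least index of a new-meeting clump (it exists since $A \subsetneq C$) and $q$ the greatest index of an old-meeting clump (it exists since $A \neq \emptyset$), this says exactly $p \geq q$. Since $b$ is the smallest element of $C \setminus A$ it lies in $L_p$, and since $m$ is the largest element of $A$ it lies in $L_q$.

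Next I would show (i) is also equivalent to $p \geq q$, which then yields definition $\iff$ (i) $\iff$ (ii). If $p = q$ then $b$ and $m$ lie in the common interval $L_p$, so $[b,m] \subseteq L_p \subseteq C$; if $p > q$ then $b > m$ and $[b,m] = \emptyset$; in either case (i) holds. Conversely, if $p < q$ then $b < m$, and the integer $g := \max(L_p) + 1$ is not in $C$ (else it would extend $L_p$) yet satisfies $b \le \max(L_p) < g$ and $g \le \min(L_q) \le m$, with $g \neq m$ as $m \in C$; so $g$ lies strictly between $b$ and $m$ and is missing from $C$, whence $[b,m] \not\subseteq C$ and (i) fails. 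Thus (i) holds iff $p \ge q$.

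Finally I would establish (i) $\iff$ (iii), where $s$ counts the elements of $C \setminus A$ below $m$. If $b > m$ then $s = 0$ and both conditions hold vacuously. If $b < m$, then under (i) one has $[b,m] \subseteq C$, so the new elements below $m$ are exactly $[b,m] \setminus A$; since every element of $C \setminus A$ is $\ge b$, these are precisely the members of $[m] \setminus A$ that are $\ge b$, and as all other members of $[m] \setminus A$ are $< b$ they form its $s$ largest elements. Conversely, if (iii) holds with $s \ge 1$, writing $g_1 < \cdots < g_s$ for those $s$ largest gaps one gets $b = g_1$, and maximality forces every element of $[m] \setminus A$ exceeding $g_1$ to lie among $g_2, \dots, g_s$; hence every integer in $[b,m]$ is either in $A$ or equal to some $g_i$, so $[b,m] \subseteq C$. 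I expect the main obstacle to be precisely this clump-and-gap bookkeeping: correctly locating $b$ and $m$ within the clump decomposition and verifying that the ``$s$ largest elements of $[m] \setminus A$'' condition is exactly the statement that the run $[b,m]$ has no holes in $C$. Everything else is routine once the indices $p$ and $q$ are in hand.
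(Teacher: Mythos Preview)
Your proof is correct. The paper does not actually supply a proof of this fact; it is simply stated as one of several ``reformulations'' of the $R$-rightmost clump deleting condition and left to the reader. Your argument---fixing $h$, recasting the clump condition as the inequality $p \geq q$ between the least new-meeting and greatest old-meeting clump indices, and then handling (iii) by identifying the new elements below $m_h$ with the top of the gap set $[m_h]\setminus B_h$---is a clean and complete way to make the omitted verification explicit.
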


\noindent Part (iii) will again be used in \cite{PW3} for projecting and lifting 312-avoidance.

The following characterization is related to Part (ii) of the preceding fact via the correspondence $\pi \longleftrightarrow B$:

\begin{fact}\label{fact320.4}An $R$-permutation $\pi$ is $R$-312-avoiding if and only if for every $h \in [r]$ one has \\ $( \min\{\pi_{q_{h}+1}, ... , \pi_{q_{h+1}} \} , \max \{\pi_1, ... , \pi_{q_{h}}\} ) \subset \{ \pi_1, ... , \pi_{q_{h+1}} \}$.  \end{fact}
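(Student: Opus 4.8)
The plan is to read the statement off as a direct translation of Fact~\ref{fact320.3}(ii) under the bijection $\pi \mapsto B$, and then to verify by hand that failure of the displayed interval condition is exactly the presence of an $R$-312 pattern. Recall that $B_h = \{\pi_1, \ldots, \pi_{q_h}\}$, so under this correspondence one has $m_h := \max(B_h) = \max\{\pi_1, \ldots, \pi_{q_h}\}$, $\; b_{h+1} := \min(B_{h+1}\backslash B_h) = \min\{\pi_{q_h+1}, \ldots, \pi_{q_{h+1}}\}$, and $B_{h+1} = \{\pi_1, \ldots, \pi_{q_{h+1}}\}$. Hence the condition displayed in Fact~\ref{fact320.4} is literally ``$(b_{h+1}, m_h) \subset B_{h+1}$ for all $h \in [r]$'', which is Fact~\ref{fact320.3}(ii). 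So it suffices to prove that $\pi$ is $R$-312-avoiding if and only if this interval condition holds for every $h \in [r]$; I would prove the contrapositive, that $\pi$ is $R$-312-containing if and only if the condition fails for some $h$.

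For one direction I would assume the condition fails at some $h \in [r]$: there is an integer $v$ with $b_{h+1} < v < m_h$ and $v \notin \{\pi_1, \ldots, \pi_{q_{h+1}}\}$. Since $\pi$ is a permutation, $v = \pi_c$ for a unique $c$, and $c > q_{h+1}$ because $v$ avoids the first $q_{h+1}$ entries. Choosing $a \in [1, q_h]$ with $\pi_a = m_h$ and $b \in (q_h, q_{h+1}]$ with $\pi_b = b_{h+1}$, the inequalities $b_{h+1} < v < m_h$ read $\pi_b < \pi_c < \pi_a$, which together with $a \le q_h < b \le q_{h+1} < c$ is precisely an $R$-312 pattern; here $h \le r-1$ automatically, since for $h = r$ one has $\{\pi_1, \ldots, \pi_{q_{r+1}}\} = [n]$ and the condition cannot fail. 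Conversely, given an $R$-312 pattern witnessed by $h \in [r-1]$ and $a \le q_h < b \le q_{h+1} < c \le n$ with $\pi_b < \pi_c < \pi_a$, I would check that this same $h$ breaks the condition: $m_h \ge \pi_a > \pi_c$ and $b_{h+1} \le \pi_b < \pi_c$ give $\pi_c \in (b_{h+1}, m_h)$, while $c > q_{h+1}$ forces $\pi_c \notin \{\pi_1, \ldots, \pi_{q_{h+1}}\}$, so $(b_{h+1}, m_h) \not\subset B_{h+1}$.

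The substance of the argument is entirely this dictionary between a value that is ``forced out'' of the first $h+1$ carrels while lying numerically strictly between a later cohort's minimum and an earlier maximum, and a 312 pattern straddling the $h$-th divider; there is no deep obstacle. The only point requiring care is the index bookkeeping: the definition of $R$-312-containment ranges over $h \in [r-1]$ whereas the condition in Fact~\ref{fact320.4} ranges over $h \in [r]$, so I would note explicitly that the $h = r$ instance is vacuous, and I would confirm that the witnessing indices $a$, $b$, $c$ produced in each direction satisfy the carrel inequalities $a \le q_h < b \le q_{h+1} < c$ demanded by the definition. As a byproduct, combining this equivalence with Fact~\ref{fact320.3} shows that $\pi \mapsto B$ restricts to a bijection from $S_n^{R\text{-}312}$ onto the $R$-rightmost clump deleting chains, which is the content recorded later in Proposition~\ref{prop320.2}.
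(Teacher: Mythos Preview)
Your proposal is correct. The paper states Fact~\ref{fact320.4} without proof, merely remarking that it is ``related to Part (ii) of the preceding fact via the correspondence $\pi \longleftrightarrow B$''; your direct contrapositive verification, together with the observation that the $h = r$ instance is vacuous, is precisely the routine argument the paper leaves to the reader.
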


Is it possible to characterize the rank $R$-tuple $\Psi_R(\pi) =: \psi$ of an arbitrary $R$-permutation $\pi$?  An \emph{$R$-flag} is an $R$-increasing upper tuple $\varepsilon$ such that $\varepsilon_{q_{h+1} +1 - u} \geq \varepsilon_{q_{h} +1 - u}$ for $h \in [r]$ and $u \in [\min\{ p_{h+1}, p_{h}\}]$.  It can be seen that $\psi$ is necessarily an $R$-flag.  But the three conditions required so far (upper, $R$-increasing, $R$-flag) are not sufficient:  When $n = 4$ and $R = \{ 1, 3 \}$, the $R$-flag $(3;2,4;4)$ cannot arise as the rank $R$-tuple of an $R$-permutation.  In contrast to the upper flag characterization in the full case, it might not be possible to develop a simply stated sufficient condition for an $R$-tuple to be the rank $R$-tuple  $\Psi_R(\pi)$ of a general $R$-permutation $\pi$.  But it can be seen that the rank $R$-tuple $\psi$ of an $R$-312-avoiding permutation $\pi$ is necessarily a gapless $R$-tuple, since a failure of `gapless' for $\psi$ leads to the containment of an $R$-312 pattern.  Building upon the observation that $UG(n) = UF(n)$ in the full case, this seems to indicate that the notion of ``gapless $R$-tuple'' is the correct generalization of the notion of ``flag'' from $[n-1]$-tuples to $R$-tuples.  (It can be seen directly that a gapless $R$-tuple is necessarily an $R$-flag.)

Two bijections lie at the heart of this work; the second one will again be used in \cite{PW3} to prove Theorem 9.1.

\begin{prop}\label{prop320.2}For general $R \subseteq [n-1]$ we have:

\noindent (i)  The restriction of the global bijection $\pi \mapsto B$ from $S_n^R$ to $S_n^{R\text{-}312}$ is a bijection to the set of $R$-rightmost clump deleting chains.

\noindent (ii)  The restriction of the rank $R$-tuple map $\Psi_R$ from $S_n^R$ to $S_n^{R\text{-}312}$ is a bijection to $UG_R(n)$ whose inverse is $\Pi_R$.  \end{prop}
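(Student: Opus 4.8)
The map $\pi \mapsto B$ is already a bijection from $S_n^R$ onto the set of all $R$-chains, so it suffices to show that under this correspondence $\pi$ is $R$-$312$-avoiding if and only if $B$ is $R$-rightmost clump deleting. My plan is simply to match Fact \ref{fact320.4} against Fact \ref{fact320.3}(ii). Since $B_h = \{\pi_1,\ldots,\pi_{q_h}\}$, we have $\max\{\pi_1,\ldots,\pi_{q_h}\} = m_h$, the $(h+1)^{st}$ cohort $\{\pi_{q_h+1},\ldots,\pi_{q_{h+1}}\}$ equals $B_{h+1}\backslash B_h$ and hence has minimum $b_{h+1}$, and $\{\pi_1,\ldots,\pi_{q_{h+1}}\} = B_{h+1}$. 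Thus the defining condition of Fact \ref{fact320.4}, namely $(b_{h+1},m_h)\subset B_{h+1}$ for every $h\in[r]$, is literally condition (ii) of Fact \ref{fact320.3}. Restricting the global bijection to the two coinciding subsets gives the asserted bijection.

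\textbf{Part (ii), setup.} First note that $\Psi_R(\pi)$ depends on $\pi$ only through its chain $B$, since $\psi_i = \mathrm{rank}^{q_h-i+1}(B_h)$; concretely, the $h^{th}$ carrel of $\psi$ lists the $p_h$ largest elements of $B_h$ in increasing order. The plan is to prove that $\Psi_R$ and $\Pi_R$ are mutually inverse bijections between $S_n^{R\text{-}312}$ and $UG_R(n)$, in four moves, after which injectivity and surjectivity both come for free from the two composites being identities. (a) If $B$ is $R$-rightmost clump deleting, then $\psi$ is gapless: for each $h$, Fact \ref{fact320.3}(i) gives $[b_{h+1},m_h]\subseteq B_{h+1}$, so the elements of $B_{h+1}$ that are $\leq m_h$ and survive into the top $p_{h+1}$ form a consecutive run $m_h-u+1,\ldots,m_h$ ending at $m_h$ (here $u$ is the number of new elements below $m_h$), while the remaining carrel entries exceed $m_h$; this run is exactly the catch-up string demanded by the gapless condition. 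With part (i) this yields $\Psi_R(S_n^{R\text{-}312})\subseteq UG_R(n)$. (b) $\Pi_R$ is well defined and outputs an $R$-permutation: the availability count given when $\Pi_R$ was introduced (using the upper property) supplies enough unused elements of $[\gamma_{q_h}]$ for the left side, the left-side values are distinct from all earlier values and, being $\leq\gamma_{q_h}$, from the right-side values, which exceed $\gamma_{q_h}$, and each side is increasing.

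\textbf{The composites.} For $\Pi_R\circ\Psi_R=\mathrm{id}$ and $\Psi_R\circ\Pi_R=\mathrm{id}$ I would induct on $h$, the hypothesis being that the two tuples agree on carrels $1,\ldots,h$ and that $\max\{\pi_1,\ldots,\pi_{q_h}\}=\gamma_{q_h}=m_h$ (as already observed for $\Pi_R$). Carrel $1$ is immediate because an $R$-increasing $\pi$ satisfies $\psi_i=\pi_i$ there. For the inductive step I split carrel $h+1$ at the threshold $m_h$: the new elements exceeding $m_h$ occupy the top positions, where $\psi$ records them verbatim and $\Pi_R$'s right side copies $\gamma$; the new elements below $m_h$ occupy the initial catch-up positions. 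The crux is that $\Pi_R$ fills those initial positions greedily with the largest unused elements of $[m_h]$, while Fact \ref{fact320.3}(iii) says precisely that the sub-$m_h$ new elements of a rightmost clump deleting chain are those same largest unused elements; reading both lists in increasing order closes the induction for $\Pi_R\circ\Psi_R=\mathrm{id}$. The reverse composite is the same computation read backwards, using gaplessness of $\gamma$ to identify the run $\gamma_{q_h}-s+1,\ldots,\gamma_{q_h}$ with the top sub-$m_h$ part of $B_{h+1}$. Finally, since the chain of $\Pi_R(\gamma)$ satisfies Fact \ref{fact320.3}(iii) by construction, it is $R$-rightmost clump deleting, so part (i) places $\Pi_R(\gamma)$ in $S_n^{R\text{-}312}$.

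\textbf{Main obstacle.} The only place where the hypotheses genuinely bite is the alignment in the inductive step: one must check that ``the largest unused elements of $[m_h]$'' produced by $\Pi_R$ coincide with ``the new elements below $m_h$'' of the original permutation, and this is exactly $R$-$312$-avoidance repackaged as Fact \ref{fact320.3}(iii). So the real work is organizing the carrel-by-carrel induction so that the threshold $m_h$, the catch-up length, and the two orderings line up, and treating carefully the degenerate catch-up run of length one whose value equals $m_h$ (where the first carrel entry repeats the previous maximum), which must still be read as a filled left-side position rather than a verbatim copy.
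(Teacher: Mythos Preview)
Your argument is correct and follows essentially the same approach as the paper's proof: part (i) by matching Fact \ref{fact320.4} against Fact \ref{fact320.3}(ii), and part (ii) by verifying that $\Psi_R$ and $\Pi_R$ are mutually inverse via Fact \ref{fact320.3}(iii). Your attention to the boundary case $\gamma_{q_h+1}=\gamma_{q_h}$ is well placed---it must indeed be handled as a left-side fill with $s=1$ rather than a verbatim copy (the paper's own Table 2.2 example, where $\gamma_{q_2}=\gamma_{q_2+1}=9$ yet $\pi_9=8$, confirms this reading).
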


\begin{proof}Setting $b_h = \min\{\pi_{q_{h}+1}, ... , \pi_{q_{h+1}} \}$ and $m_{h} = \max \{\pi_1, ... , \pi_{q_{h}}\}$, use Fact \ref{fact320.4}, the $\pi \mapsto B$ bijection, and Fact \ref{fact320.3}(ii) to confirm (i).  As noted above, the restriction of $\Psi_R$ to $S_n^{R\text{-}312}$ gives a map to $UG_R(n)$.  Let $\gamma \in UG_R(n)$ and construct $\Pi_R(\gamma) =: \pi$.  Let $h \in [r]$.  Recall that $\max\{ \pi_1, ... , \pi_{q_h} \} = \gamma_{q_h}$.  Since $\gamma$ is $R$-increasing it can be seen that the $\pi_i$ are distinct.  So $\pi$ is an $R$-permutation.  Let $s \geq 0$ be the number of entries of $\{ \pi_{q_{h}+1} , ... , \pi_{q_{h+1}} \}$ that are less than $\gamma_{q_{h}}$.  These are the $s$ largest elements of $[\gamma_{q_{h}}] \backslash \{ \pi_1, ... , \pi_{q_{h}} \}$.  If in the hypothesis of Fact \ref{fact320.3} we take $B_h := \{\pi_1, ... , \pi_{q_h} \}$, we have $m_h = \gamma_{q_h}$.  So the chain $B$ corresponding to $\pi$ satisfies Fact \ref{fact320.3}(iii).  Since Fact \ref{fact320.3}(ii) is the same as the characterization of an $R$-312-avoiding permutation in Fact \ref{fact320.4}, we see that $\pi$ is $R$-312-avoiding.  It can be seen that $\Psi_R[\Pi_R(\gamma)] = \gamma$, and so $\Psi_R$ is surjective from $S_n^{R\text{-}312}$ to $UG_R(n)$.  For the injectivity of $\Psi_R$, now let $\pi$ denote an arbitrary $R$-312-avoiding permutation.  Form $\Psi_R(\pi)$, which is a gapless $R$-tuple.  Using Facts \ref{fact320.4} and \ref{fact320.3}, it can be seen that $\Pi_R[\Psi_R(\pi)] = \pi$.  Hence $\Psi_R$ is injective.  \end{proof}

\section{Row end max tableaux, gapless (312-avoiding) keys}

We study the $\lambda$-row end max tableaux of gapless $\lambda$-tuples.  We also form the $\lambda$-keys of the $R$-312-avoiding permutations and introduce ``gapless'' lambda-keys.  We show that these three sets of tableaux coincide.

Let $\alpha \in UI_\lambda(n)$.  The values of the $\lambda$-row end max tableau $M_{\lambda}(\alpha) =: M$ can be determined as follows:  For $h \in [r]$ and $j \in (\lambda_{q_{h+1}}, \lambda_{q_h}]$, first set $M_j(i) = \alpha_i$ for $i \in (q_{h-1}, q_h]$.  When $h > 1$, from east to west among columns and south to north within a column, also set $M_j(i) := \min\{ M_j(i+1)-1, M_{j+1}(i) \}$ for $i \in (0, q_{h-1}]$.  Finally, set $M_j(i) := i$ for $j \in (0, \lambda_n]$ and $i \in (0,n]$.  (When $\zeta_j = \zeta_{j+1}$, this process yields $M_j = M_{j+1}$.)  The example tableau in Section 3 is $M_\lambda(\alpha)$ for $\alpha = (1,4,6,7,10;7,8,9,10;10,12; 12)$.  There we have $s = 4$ and $s = 1$ respectively for $h =1$ and $h=2$:

\begin{lem}\label{lemma340.1}Let $\gamma$ be a gapless $\lambda$-tuple.  The $\lambda$-row end max tableau $M_{\lambda}(\gamma) =: M$ is a key.  For $h \in [r]$ and $j := \lambda_{q_{h+1}}$, the $s \geq 0$ elements in $B(M_{j}) \backslash B(M_{j+1})$ that are less than $M_{j+1}(q_{h}) = \gamma_{q_{h}}$ are the $s$ largest elements of $[\gamma_{q_{h}}] \backslash B(M_{j+1})$. \end{lem}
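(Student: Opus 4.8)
The plan is to prove both assertions at once by an induction that sweeps the columns of $M := M_\lambda(\gamma)$ from east to west, in the order in which the construction of $M_\lambda(\gamma)$ recalled just above the lemma produces them. Since that construction gives $M_j = M_{j+1}$ whenever $\zeta_j = \zeta_{j+1}$, all columns of a single ``block'' of equal length $q_h$ carry one common value set, which I denote $D_h := B(M_j)$ for $j \in (\lambda_{q_{h+1}}, \lambda_{q_h}]$; the inert columns give $D_{r+1} = [n]$. Being a key is equivalent to the nesting $D_1 \subseteq D_2 \subseteq \cdots \subseteq D_r \subseteq D_{r+1} = [n]$, so it suffices to treat one cliff: the passage from the leftmost column $j+1 := \lambda_{q_{h+1}}+1$ of block $h$ (length $q_h$, value set $D_h$) to the rightmost column $j := \lambda_{q_{h+1}}$ of block $h+1$ (length $q_{h+1}$, value set $D_{h+1}$). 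The base case $h=1$ is immediate, since there the construction sets $M_j(i) = \gamma_i$ for $i \in (0,q_1]$, so $D_1 = \{\gamma_1, \ldots, \gamma_{q_1}\}$ with $\max D_1 = \gamma_{q_1}$.

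For the inductive step, write $D_h = \{c_1 < \cdots < c_{q_h}\}$ with $c_i = M_{j+1}(i)$; by construction $c_{q_h} = M_{j+1}(q_h) = \gamma_{q_h}$, which is the equality asserted in the statement. The bottom carrel of column $j$ carries $\gamma_{q_h+1} < \cdots < \gamma_{q_{h+1}}$, and its top $q_h$ entries are produced by $M_j(i) = \min\{M_j(i+1)-1,\, M_{j+1}(i)\}$ seeded at $M_j(q_h) = \min\{\gamma_{q_h+1}-1,\, \gamma_{q_h}\}$. The first simplification is to unroll this recursion: because $c_1 < \cdots < c_{q_h}$ are strictly increasing integers one has $c_{i'} - (i'-i) \geq c_i$ for $i' \geq i$, so the unrolled minimum collapses to $M_j(i) = \min\{c_i,\, \gamma_{q_h+1} - 1 - (q_h - i)\}$ for $i \in (0,q_h]$.

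When there is no descent ($\gamma_{q_h} < \gamma_{q_h+1}$, hence $s=0$) the linear term dominates and $M_j(i) = c_i$ throughout, so $D_{h+1} = D_h \cup \{\gamma_{q_h+1}, \ldots, \gamma_{q_{h+1}}\}$ with all new values exceeding $\gamma_{q_h}$; nesting holds and the structural claim is vacuous. The substantive case is a descent ($\gamma_{q_h} > \gamma_{q_h+1}$), where the gapless hypothesis forces the first $s := \gamma_{q_h} - \gamma_{q_h+1}+1$ carrel entries to be the consecutive run $\gamma_{q_h}-s+1, \ldots, \gamma_{q_h}$. Here I set $t_i := \gamma_{q_h+1}-1-(q_h-i) = \gamma_{q_h}-s-(q_h-i)$ and note that $c_i - t_i$ is nondecreasing in $i$ (each $c$ rises by at least $1$, each $t$ by exactly $1$) and equals $s>0$ at $i=q_h$. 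Letting $i^\ast$ be the least index with $c_{i^\ast} \geq t_{i^\ast}$, the column splits cleanly: $M_j(i) = c_i$ for $i < i^\ast$ and $M_j(i) = t_i$ for $i^\ast \leq i \leq q_h$, so the top part is $\{c_1, \ldots, c_{i^\ast-1}\}$ together with the consecutive run $t_{i^\ast}, \ldots, \gamma_{q_h}-s$. Splicing this run onto the first $s$ carrel values fills a single block $[t_{i^\ast}, \gamma_{q_h}]$, giving $D_{h+1} = \{c_1,\ldots,c_{i^\ast-1}\} \cup [t_{i^\ast}, \gamma_{q_h}] \cup \{\gamma_{q_h+s+1}, \ldots, \gamma_{q_{h+1}}\}$.

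Everything then falls out of this description. Since $\{c_{i^\ast}, \ldots, c_{q_h}\} \subseteq [t_{i^\ast}, \gamma_{q_h}]$ (each such $c_i$ satisfies $t_{i^\ast} \leq t_i \leq c_i \leq c_{q_h}=\gamma_{q_h}$) and $c_{i^\ast-1} < t_{i^\ast}$, one gets $D_h \subseteq D_{h+1}$, which proves the key property at this cliff and hence, with the within-block equalities and $D_r \subseteq [n]$, for all of $M$. Moreover the elements of $D_{h+1}\setminus D_h$ below $\gamma_{q_h}$ are exactly $[t_{i^\ast}, \gamma_{q_h}] \setminus \{c_{i^\ast}, \ldots, c_{q_h}\}$; a one-line count using $\gamma_{q_h} - t_{i^\ast} = s + (q_h - i^\ast)$ shows there are precisely $s$ of them, matching the count in the statement. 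Finally, since $D_h \cap [t_{i^\ast}, \gamma_{q_h}] = \{c_{i^\ast}, \ldots, c_{q_h}\}$ while every element of $[\gamma_{q_h}]\setminus D_h$ lying below $t_{i^\ast}$ is smaller, these $s$ elements are the $s$ largest members of $[\gamma_{q_h}]\setminus D_h$, as claimed (this also identifies $D_h$ with the $h$-th set of the $R$-chain of $\Pi_\lambda(\gamma)$ through Fact \ref{fact320.3}(iii)). I expect the only genuine obstacle to be the descent case: one must check that the min-recursion's output dovetails with the gapless carrel to fill exactly one gap, and the monotonicity/threshold argument for $i^\ast$ is precisely what makes this rigorous. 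The gapless hypothesis is used exactly once, to supply the consecutive carrel values $\gamma_{q_h}-s+1,\ldots,\gamma_{q_h}$ that complete the run.
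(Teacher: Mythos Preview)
Your proof is correct and follows essentially the same route as the paper's: both split into the easy case (no descent across the cliff) and the substantive case, and in the latter both locate a threshold row index (your $i^\ast$ is the paper's $k+1$) below which $M_j$ copies $M_{j+1}$ and above which $M_j$ forms a consecutive run that, spliced with the gapless carrel entries, fills the interval $[t_{i^\ast},\gamma_{q_h}]$; your explicit unrolling $M_j(i)=\min\{c_i,\,\gamma_{q_h+1}-1-(q_h-i)\}$ is just an algebraic rendering of the paper's verbal description of the recursion. One boundary nit: the case $\gamma_{q_h}=\gamma_{q_h+1}$ falls in neither of your two branches as written, but your descent analysis covers it verbatim with $s=1$ (the paper groups it into the ``otherwise $\gamma_{q_h+1}\le\gamma_{q_h}$'' case), so this is a wording issue rather than a gap.
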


\begin{proof} Let $h \in [r]$ and set $j := \lambda_{q_{h+1}}$.  We claim $B(M_{j+1}) \subseteq B(M_j)$.  If $M_j(q_{h} + 1) = \gamma_{q_{h}+1}  >  \gamma_{q_h} = M_{j+1}(q_{h})$, then $M_j(i) = M_{j+1}(i)$ for $i \in (0, q_{h}]$ and the claim holds.  Otherwise $\gamma_{q_{h} + 1} \leq \gamma_{q_{h}}$.  The gapless condition on $\gamma$ implies that if we start at $(j, q_{h}+1)$ and move south, the successive values in $M_j$ increment by 1 until some lower box has the value $\gamma_{q_{h}}$.  Let $i \in (q_{h}, q_{h+1}]$ be the index such that $M_j(i) = \gamma_{q_{h}}$.  Now moving north from $(j,i)$, the values in $M_j$ decrement by 1 either all of the way to the top of the column, or until there is a row index $k \in (0, q_{h})$ such that $M_{j+1}(k) < M_j(k+1)-1$.  In the former case set $k := 0$.  In the example we have $k=1$ and $k=0$ respectively for $h=1$ and $h=2$.  If $k > 0$ we have $M_j(x) = M_{j+1}(x)$ for $x \in (0,k]$.  Now use  $M_j(k+1) \leq M_{j+1}( k+1)$ to see that the values $M_{j + 1}(k+1), M_{j+1}( k+2), ... , M_{j+1}( q_{h})$ each appear in the interval of values $[ M_j(k+1), M_j(i) ]$.  Thus $B(M_{j+1}) \subseteq B(M_j)$.  Using the parenthetical remark made before the lemma's statement, we see that $M$ is a key.  There are $q_{h+1} - i$ elements in $B(M_j) \backslash B(M_{j+1})$ that are larger than $M_{j+1}( q_{h}) = \gamma_{q_{h}}$.  So $s := (q_{h+1} - q_{h}) - (q_{h+1} - i) \geq 0$ is the number of values in $B(M_j) \backslash B(M_{j+1})$ that are less than $\gamma_{q_{h}}$.  These $s$ values are the complement in $[ M_j(k+1), M_j(i) ]$ of the set $\{ \hspace{1mm} M_{j+1}(x) : x \in [k+1, q_{h}] \hspace{1mm} \}$, where $M_j(i) = M_{j+1}(q_{h}) = \gamma_{q_{h}}$.  \end{proof}

We now introduce a tableau analog to the notion of ``$R$-rightmost clump deleting chain''.  A $\lambda$-key $Y$ is \textit{gapless} if the condition below is satisfied for $h \in [r-1]$:  Let $b$ be the smallest value in a column of length $q_{h+1}$ that does not appear in a column of length $q_{h}$.  For $j \in (\lambda_{q_{h +2}}, \lambda_{q_{h+1}}]$, let $i \in (0, q_{h+1}]$ be the shared row index for the occurrences of $b = Y_j(i)$.  Let $m$ be the bottom (largest) value in the columns of length $q_{h}$.  If $b > m$ there are no requirements.  Otherwise:  For $j \in (\lambda_{q_{h +2}}, \lambda_{q_{h+1}}]$, let $k \in (i, q_{h+1}]$ be the shared row index for the occurrences of $m = Y_j(k)$.  For $j \in (\lambda_{q_{h + 2}}, \lambda_{q_{h+1}}]$ one must have $Y_j(i+1) = b+1, Y_j(i+2) = b+2, ... , Y_j(k-1) = m-1$ holding between $Y_j(i) = b$ and $Y_j(k) = m$.  (Hence necessarily $m - b = k - i$.)  The tableau shown above is a gapless $\lambda$-key.

Given a partition $\lambda$ with $R_\lambda =: R$, our next result considers three sets of $R$-tuples and three sets of tableaux of shape $\lambda$:

\noindent (a)   The set  $\mathcal{A}_R$  of $R$-312-avoiding permutations and the set $\mathcal{P}_\lambda$  of their $\lambda$-keys.

\noindent (b)   The set  $\mathcal{B}_R$  of $R$-rightmost clump deleting chains and the set  $\mathcal{Q}_\lambda$ of gapless $\lambda$-keys.

\noindent (c)   The set  $\mathcal{C}_R$  of gapless $R$-tuples and the set  $\mathcal{R}_\lambda$  of their $\lambda$-row end max tableaux.

\newpage

\begin{thm}\label{theorem340}Let $\lambda$ be a partition and set $R := R_\lambda$.

\noindent (i) The three sets of tableaux coincide:  $\mathcal{P}_\lambda = \mathcal{Q}_\lambda = \mathcal{R}_\lambda$.

\noindent (ii)  An $R$-permutation is  $R$-312-avoiding if and only if its $\lambda$-key is gapless.

\noindent (iii) If an $R$-permutation is $R$-312-avoiding, then the $\lambda$-row end max tableau of its rank $R$-tuple is its $\lambda$-key.

\noindent The restriction of the global bijection $B \mapsto Y$ from all $R$-chains to $R$-rightmost clump deleting chains is a bijection from $\mathcal{B}_R$ to $\mathcal{Q}_\lambda$.  The process of constructing the $\lambda$-row end max tableau is a bijection from $\mathcal{C}_R$ to $\mathcal{R}_\lambda$.  The bijection $\pi \mapsto B$ from $\mathcal{A}_R$ to $\mathcal{B}_R$ induces a map from $\mathcal{P}_\lambda$ to $\mathcal{Q}_\lambda$ that is the identity.  The bijection $\Psi_R$ from $\mathcal{A}_R$ to $\mathcal{C}_R$ induces a map from $\mathcal{P}_\lambda$ to $\mathcal{R}_\lambda$ that is the identity.\end{thm}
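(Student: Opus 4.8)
**The plan is to establish the chain of equalities $\mathcal{P}_\lambda = \mathcal{Q}_\lambda = \mathcal{R}_\lambda$ by routing everything through the bijections and facts already assembled.** The three sets of tableaux are defined as images of three sets of $R$-tuples/chains under three constructions ($\pi \mapsto Y_\lambda(\pi)$, $B \mapsto Y$, and $\gamma \mapsto M_\lambda(\gamma)$), and we already know from Section 2 that the $R$-tuple sides are in bijection: $\mathcal{A}_R \cong \mathcal{B}_R$ via $\pi \mapsto B$, and $\mathcal{A}_R \cong \mathcal{C}_R$ via $\Psi_R|_{S_n^{R\text{-}312}}$ (Proposition~\ref{prop320.2}). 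So the strategy is to verify that each construction on tableaux commutes with the corresponding $R$-tuple bijection, reducing set equalities to identities of tableaux. I would prove the statements in the order (iii), then (i), then (ii), since (iii) is the technical heart and the rest follows by bookkeeping.

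\textbf{First I would prove (iii): for $R$-312-avoiding $\pi$ with rank $R$-tuple $\psi := \Psi_R(\pi)$, one has $M_\lambda(\psi) = Y_\lambda(\pi)$.} This is the key step and I expect it to be the main obstacle. Recall from Section 3 that the $\lambda$-row end list of $Y_\lambda(\pi)$ is exactly $\psi$, so $Y_\lambda(\pi) \in \mathcal{Z}_\lambda(\psi)$; since $M_\lambda(\psi)$ is by definition the unique maximal element of $\mathcal{Z}_\lambda(\psi)$, it suffices to show $Y_\lambda(\pi)$ is itself maximal in that set, equivalently that $Y_\lambda(\pi) = M_\lambda(\psi)$ column by column. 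The natural route is to compare the explicit column-filling recursion for $M_\lambda(\psi)$ (stated just before Lemma~\ref{lemma340.1}) against the columns $Y(B_h)$ of the key. The crucial input is Lemma~\ref{lemma340.1}, which describes precisely how $B(M_j) \backslash B(M_{j+1})$ is built from the $s$ largest elements of $[\gamma_{q_h}] \backslash B(M_{j+1})$; this must be matched against Fact~\ref{fact320.3}(iii), which describes how the new clump elements of the $R$-rightmost clump deleting chain $B$ (corresponding to $\pi$ by Proposition~\ref{prop320.2}(i)) are likewise the $s$ largest elements of $[m_h] \backslash B_h$. Since $\psi$ is gapless (Proposition~\ref{prop320.2}(ii)) and $B$ is $R$-rightmost clump deleting, the two "take the $s$ largest available elements below the current max" prescriptions coincide, so an induction on the carrel index $h$ (equivalently on the column blocks from east to west) forces $B(M_{\lambda_{q_h}}) = B_h = B(Y_{\lambda_{q_h}})$ at each cliff, giving $M_\lambda(\psi) = Y_\lambda(\pi)$.

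\textbf{Next I would deduce (i) and (ii).} For (i), the inclusion $\mathcal{P}_\lambda \subseteq \mathcal{R}_\lambda$ is immediate from (iii): every $\lambda$-key of an $R$-312-avoiding $\pi$ equals $M_\lambda(\Psi_R(\pi))$, and since $\Psi_R$ surjects onto $UG_R(n) = \mathcal{C}_R$ (Proposition~\ref{prop320.2}(ii)), every gapless $R$-tuple's row end max tableau arises this way, giving $\mathcal{R}_\lambda \subseteq \mathcal{P}_\lambda$ as well; hence $\mathcal{P}_\lambda = \mathcal{R}_\lambda$. For $\mathcal{P}_\lambda = \mathcal{Q}_\lambda$ I would argue that under the bijection $B \mapsto Y$, the $R$-rightmost clump deleting condition on $B$ (Fact~\ref{fact320.3}) translates entry-for-entry into the gapless condition on the key $Y$: the "new clump plus catch-up" description of $B_{h+1} \backslash B_h$ becomes exactly the requirement in the definition of a gapless $\lambda$-key that the entries strictly between $b$ and $m$ in a width-$q_{h+1}$ column increment by one. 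Since $\mathcal{P}_\lambda$ is the image of $\mathcal{A}_R$ and $\mathcal{A}_R \cong \mathcal{B}_R$ under $\pi \mapsto B$ (Proposition~\ref{prop320.2}(i)), and $\mathcal{B}_R$ maps to $\mathcal{Q}_\lambda$ under $B \mapsto Y$, this yields $\mathcal{P}_\lambda = \mathcal{Q}_\lambda$. Statement (ii) is then just the "only if" packaging: an $R$-permutation $\pi$ lies in $\mathcal{A}_R$ iff $Y_\lambda(\pi) \in \mathcal{P}_\lambda = \mathcal{Q}_\lambda$, i.e. iff its $\lambda$-key is gapless—requiring a short additional check that a non-$R$-312-avoiding $\pi$ produces a non-gapless key, which follows from the same dictionary run backwards using that $B \mapsto Y$ is a global bijection. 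Finally, the four concluding bijection assertions of the theorem are then read off directly: each is the statement that one of the already-established set bijections ($B \mapsto Y$, $\gamma \mapsto M_\lambda(\gamma)$) or induced maps restricts/descends as claimed, with the two "identity" assertions being precisely the content of (iii) and of the $B \mapsto Y$ translation just verified.
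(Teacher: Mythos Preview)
Your proposal is correct and follows essentially the same approach as the paper: the equality $\mathcal{P}_\lambda = \mathcal{Q}_\lambda$ is obtained by translating the $R$-rightmost clump deleting condition (Fact~\ref{fact320.3}) through the global bijection $B \mapsto Y$ into the gapless $\lambda$-key condition, and Part~(iii) is proved by an induction on the carrel index $h$ that matches the ``$s$ largest available elements'' description from Lemma~\ref{lemma340.1} against Fact~\ref{fact320.3}(iii), after which $\mathcal{P}_\lambda = \mathcal{R}_\lambda$ and the remaining bijection statements follow. The only difference is cosmetic ordering: the paper establishes $\mathcal{P}_\lambda = \mathcal{Q}_\lambda$ and Part~(ii) first and then proves Part~(iii), whereas you prove Part~(iii) first; the substantive arguments are identical.
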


\noindent Part (iii) will again be used in \cite{PW3} to prove Theorem 9.1; there it will also be needed for the discussion in Section 12.  In the full case when $\lambda$ is strict and $R = [n-1]$, the converse of Part (iii) holds:  If the row end max tableau of the rank tuple of a permutation is the key of the permutation, then the permutation is 312-avoiding.  For a counterexample to this converse for general $\lambda$, choose $n = 4, \lambda = (2,1,1,0)$, and $\pi = (4;1,2;3)$.  Then $Y_\lambda(\pi) = M_\lambda(\psi)$ with $\pi \notin S_n^{R\text{-}312}$.  The bijection from $\mathcal{C}_R$ to $\mathcal{R}_\lambda$ and the equality $\mathcal{Q}_\lambda = \mathcal{R}_\lambda$ imply that an $R$-tuple is $R$-gapless if and only if it arises as the $\lambda$-row end list of a gapless $\lambda$-key.

\begin{proof}

\noindent For the first of the four map statements, use the $B \mapsto Y$ bijection to relate Fact \ref{fact320.3}(i) to the definition of gapless $\lambda$-key.  The map in the second map statement is surjective by definition and is also obviously injective.  Use the construction of the bijection $\pi \mapsto B$ and the first map statement to confirm the equality $\mathcal{P}_\lambda = \mathcal{Q}_\lambda$ and the third map statement.  Part (ii) follows.

To prove Part (iii), let $\pi \in S_n^{R\text{-}312}$.  Create the $R$-chain $B$ corresponding to $\pi$ and then its $\lambda$-key $Y := Y_\lambda(\pi)$.  Set $\gamma := \Psi_R(\pi)$ and then $M := M_\lambda(\gamma)$.  Clearly $B(Y_{\lambda_v}) = B_1 = \{ \gamma_1, ... , \gamma_v \} = B(M_{\lambda_v})$ for $v := q_1$.  Proceed by induction on $h \in [r]$:  For $v := q_h$ assume $B(Y_{\lambda_v}) = B(M_{\lambda_v})$.  So $\max[B(Y_{\lambda_v})] = Y_{\lambda_v}(v) = M_{\lambda_v}(v) = \gamma_v$.  Rename the example $\alpha$ before Lemma \ref{lemma340.1} as $\gamma$.  Viewing that tableau as $M_\lambda( \gamma ) =: M$, for $h=2$ we have $M_5(9) = \gamma_9 = 10$.  Set $v^\prime := q_{h+1}$.  Let $s_Y$ be the number of values in $B(Y_{\lambda_{v^\prime}}) \backslash B(Y_{\lambda_v})$ that are less than $\gamma_v$.  Viewing the example tableau as $Y$, for $h=2$ we have $s_Y = 1$.  Since $\gamma_v \in B(Y_{\lambda_v})$, the number of values in $B(Y_{\lambda_{v^\prime}}) \backslash B(Y_{\lambda_v})$ that exceed $\gamma_v$ is $p_{h+1} - s_Y$.  These values are the entries in $\{ \pi_{v+1} , ... , \pi_{v^\prime} \}$ that exceed $\gamma_v$.  So from $\gamma := \Psi_R(\pi)$ and the description of $M_\lambda(\gamma)$ it can be seen that these values are exactly the values in $B(M_{\lambda_{v^\prime}}) \backslash B(M_{\lambda_v})$ that exceed $\gamma_v$.  Let $s_M$ be the number of values in $B(M_{\lambda_{v^\prime}}) \backslash B(M_{\lambda_v})$ that are less than $\gamma_v$.  Since $M$ is a key by Lemma \ref{lemma340.1} and $\gamma_v \in B(M_{\lambda_v})$, we have $s_M = p_{h+1} - (p_{h+1}-s_Y) = s_Y =: s$.  From Proposition \ref{prop320.2}(i) we know that $B$ is $R$-rightmost clump deleting.  By Fact \ref{fact320.3}(iii) applied to $B$ and Lemma \ref{lemma340.1} applied to $\gamma$, we see that for both $Y$ and for $M$ the ``new'' values that are less than $\gamma_v$ are the $s$ largest elements of $[\gamma_v] \backslash B(Y_{\lambda_v}) = [\gamma_v] \backslash B(M_{\lambda_v})$.  Hence $Y_{\lambda_{v^\prime}} = M_{\lambda_{v^\prime}}$.  Since we only need to consider the rightmost columns of each length when showing that two $\lambda$-keys are equal, we have $Y = M$.  The equality $\mathcal{P}_\lambda = \mathcal{R}_\lambda$ and the final map statement follow.  \end{proof}

\begin{cor}When $\lambda$ is strict, there are $C_n$ gapless $\lambda$-keys. \end{cor}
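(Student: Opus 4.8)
The plan is to reduce the enumeration of gapless $\lambda$-keys to the classical count of $312$-avoiding permutations, using Theorem \ref{theorem340} as the main engine. The first observation I would record is that when $\lambda$ is strict we are in the full case: as noted in Section 3, $\lambda$ is strict exactly when $R_\lambda = [n-1]$. So I set $R := R_\lambda = [n-1]$ and let $\mathcal{Q}_\lambda$ denote the set of gapless $\lambda$-keys, which is precisely the set we wish to count.

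Next I would invoke Theorem \ref{theorem340}(i), which gives the chain of equalities $\mathcal{P}_\lambda = \mathcal{Q}_\lambda = \mathcal{R}_\lambda$. In particular $\mathcal{Q}_\lambda = \mathcal{P}_\lambda$, so the gapless $\lambda$-keys are exactly the $\lambda$-keys of the $R$-$312$-avoiding permutations. It therefore suffices to count $\mathcal{P}_\lambda$, and for this I would use the bijection $\pi \mapsto Y_\lambda(\pi)$ from $R$-permutations to $\lambda$-keys constructed in Section 3 (the composite of $\pi \mapsto B$ with $B \mapsto Y$). Restricting this global bijection to the subset $\mathcal{A}_R \subseteq S_n^R$ of $R$-$312$-avoiding permutations keeps it injective, and its image is $\mathcal{P}_\lambda$ by the definition of $\mathcal{P}_\lambda$; hence it is a bijection $\mathcal{A}_R \to \mathcal{P}_\lambda$, giving $|\mathcal{Q}_\lambda| = |\mathcal{P}_\lambda| = |\mathcal{A}_R|$.

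Finally, since $R = [n-1]$, the set $\mathcal{A}_R = S_n^{[n-1]\text{-}312}$ is just $S_n^{312}$, the ordinary $312$-avoiding permutations of $[n]$, which Section 2 records as being counted by the Catalan number $C_n$. Assembling the equalities yields $|\mathcal{Q}_\lambda| = C_n$, as claimed.

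I do not expect any real obstacle here: the corollary is essentially a bookkeeping consequence of the already-established identity $\mathcal{Q}_\lambda = \mathcal{P}_\lambda$, the bijection $\pi \mapsto Y_\lambda(\pi)$, and the classical fact $|S_n^{312}| = C_n$. The only point meriting a word of care is that the restriction of $\pi \mapsto Y_\lambda(\pi)$ to $\mathcal{A}_R$ is still injective, which is immediate because the unrestricted map is already a bijection on all of $S_n^R$.
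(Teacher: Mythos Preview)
Your proof is correct and follows the intended line: the paper states this corollary immediately after Theorem~\ref{theorem340} with no separate proof, since it is meant to be an immediate consequence of $\mathcal{Q}_\lambda = \mathcal{P}_\lambda$, the bijection $\pi \mapsto Y_\lambda(\pi)$ from $S_n^R$ to $\lambda$-keys, and the classical count $|S_n^{312}| = C_n$. You have simply made explicit the one-line deduction the paper leaves to the reader.
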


\section{Sufficient condition for Demazure convexity}

Fix a $\lambda$-permutation $\pi$.  We define the set $\mathcal{D}_\lambda(\pi)$ of Demazure tableaux.  We show that if $\pi$ is $\lambda$-312-avoiding, then the tableau set $\mathcal{D}_\lambda(\pi)$ is the principal ideal $[Y_\lambda(\pi)]$.

First we need to specify how to form the \emph{scanning tableau} $S(T)$ for a given $T \in \mathcal{T}_\lambda$.   See page 394 of \cite{Wi2} for an example of this method.  Given a sequence $x_1, x_2, ...$, its \emph{earliest weakly increasing subsequence (EWIS)} is $x_{i_1}, x_{i_2}, ...$, where $i_1 = 1$ and for $u > 1$ the index $i_u$ is the smallest index satisfying $x_{i_u} \geq x_{i_{u-1}}$.  Let $T \in \mathcal{T}_\lambda$.  Draw the shape $\lambda$ and fill its boxes as follows to produce $S(T)$:  Form the sequence of the bottom values of the columns of $T$ from left to right.  Find the EWIS of this sequence, and mark each box that contributes its value to this EWIS.  The sequence of locations of the marked boxes for a given EWIS is its \emph{scanning path}.  Place the final value of this EWIS in the lowest available location in the leftmost available column of $S(T)$.  This procedure can be repeated as if the marked boxes are no longer part of $T$, since it can be seen that the unmarked locations form the shape of some partition.  Ignoring the marked boxes, repeat this procedure to fill in the next-lower value of $S(T)$ in its first column.  Once all of the scanning paths originating in the first column have been found, every location in $T$ has been marked and the first column of $S(T)$ has been created.  For $j> 1$, to fill in the $j^{th}$ column of $S(T)$:  Ignore the leftmost $(j-1)$ columns of $T$, remove all of the earlier marks from the other columns, and repeat the above procedure.  The scanning path originating at a location $(l,k) \in \lambda$ is denoted $\mathcal{P}(T;l,k)$.  It was shown in \cite{Wi2} that $S(T)$ is the ``right key'' of Lascoux and Sch\"{u}tzenberger for $T$, which was denoted $R(T)$ there.

As in \cite{PW1}, we now use the $\lambda$-key $Y_\lambda(\pi)$ of $\pi$ to define the set of \emph{Demazure tableaux}:  $\mathcal{D}_\lambda(\pi) :=$ \\ $\{ T \in \mathcal{T}_\lambda : S(T) \leq Y_\lambda(\pi) \}$.  We list some basic facts concerning keys, scanning tableaux, and sets of Demazure tableaux.  Since it has long been known that $R(T)$ is a key for any $T \in \mathcal{T}_\lambda$, having $S(T) = R(T)$ gives Part (i).  Part (ii) is easy to deduce from the specification of the scanning method.  The remaining parts follow in succession from Part (ii) and the bijection $\pi \mapsto Y$.

\begin{fact}\label{fact420}Let $T \in \mathcal{T}_\lambda$ and let $Y \in \mathcal{T}_\lambda$ be a key.

\noindent (i)  $S(T)$ is a key and hence $S(T) \in \mathcal{T}_\lambda$.

\noindent (ii)  $T \leq S(T)$ and $S(Y) = Y$.

\noindent (iii)  $Y_\lambda(\pi) \in \mathcal{D}_\lambda(\pi)$ and $\mathcal{D}_\lambda(\pi) \subseteq [Y_\lambda(\pi)]$.

\noindent (iv)  The unique maximal element of $\mathcal{D}_\lambda(\pi)$ is $Y_\lambda(\pi)$.

\noindent (v)  The Demazure sets $\mathcal{D}_\lambda(\sigma)$ of tableaux are nonempty subsets of $\mathcal{T}_\lambda$ that are precisely indexed by the $\sigma \in S_n^\lambda$.  \end{fact}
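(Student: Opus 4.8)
The plan is to treat Parts (i) and (ii) as the two load-bearing statements and then to derive (iii)--(v) as formal consequences of (ii) together with the bijection $\pi \mapsto Y$ between $\lambda$-permutations and $\lambda$-keys.

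For Part (i), I would invoke the identification $S(T) = R(T)$ established in \cite{Wi2}, where $R(T)$ is the right key of Lascoux and Sch\"utzenberger, and then cite the long-known fact that $R(T)$ is a key for every $T \in \mathcal{T}_\lambda$. Since a key is by definition a filling of the shape $\lambda$, this simultaneously gives that $S(T)$ is a key and that $S(T) \in \mathcal{T}_\lambda$. For Part (ii), I would read both assertions off the scanning construction. To obtain $T \leq S(T)$ entrywise, I would trace scanning paths: the value deposited in the box $(j,i)$ of $S(T)$ is the terminal, and hence largest, value of a weakly increasing scanning path that meets column $j$ at or above row $i$, so it is at least $T_j(i)$. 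For $S(Y) = Y$ when $Y$ is a key, I would use that keys are exactly the fixed points of the right-key operator (equivalently, check directly from the scanning procedure, using the nesting $B(Y_l) \supseteq B(Y_j)$ for $l \leq j$ of the columns of a key).

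Granting (i) and (ii), the remaining parts follow in succession. For Part (iii): since $Y_\lambda(\pi)$ is a key, the second half of (ii) gives $S(Y_\lambda(\pi)) = Y_\lambda(\pi) \leq Y_\lambda(\pi)$, so $Y_\lambda(\pi) \in \mathcal{D}_\lambda(\pi)$ by the definition of the Demazure set; and for any $T \in \mathcal{D}_\lambda(\pi)$ the first half of (ii) and the defining inequality $S(T) \leq Y_\lambda(\pi)$ combine to give $T \leq S(T) \leq Y_\lambda(\pi)$, whence $\mathcal{D}_\lambda(\pi) \subseteq [Y_\lambda(\pi)]$. Part (iv) is then immediate, because $Y_\lambda(\pi)$ both belongs to $\mathcal{D}_\lambda(\pi)$ and dominates it, so it is the unique maximal element. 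For Part (v), each $\mathcal{D}_\lambda(\sigma)$ is a subset of $\mathcal{T}_\lambda$ by definition and is nonempty by (iii); to see that the sets are precisely indexed by $\sigma \in S_n^\lambda$, I would recover $\sigma$ from the set $\mathcal{D}_\lambda(\sigma)$ by taking its maximal element $Y_\lambda(\sigma)$ (Part (iv)) and applying the inverse of the bijection $\pi \mapsto Y$, which yields injectivity of $\sigma \mapsto \mathcal{D}_\lambda(\sigma)$.

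The main obstacle is Part (ii): once the right-key theory behind (i) and the domination $T \leq S(T)$ are in hand, Parts (iii)--(v) are pure bookkeeping. I expect the only genuinely delicate verification to be $T \leq S(T)$ directly from the scanning paths, together with confirming that a $\lambda$-key is fixed by $S$; both rest on the structural results of \cite{Wi2} identifying $S$ with the right key.
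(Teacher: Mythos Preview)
Your proposal is correct and follows essentially the same route as the paper: cite $S(T)=R(T)$ from \cite{Wi2} and the classical fact that right keys are keys for (i), deduce (ii) directly from the scanning specification, and then obtain (iii)--(v) as formal consequences of (ii) together with the bijection $\pi \mapsto Y_\lambda(\pi)$. The paper records exactly this outline in a single sentence; your expansion of the (iii)--(v) bookkeeping matches it step for step.
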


For $U \in \mathcal{T}_\lambda$, define $m(U)$ to be the maximum value in $U$.  (Define $m(U) := 1$ if $U$ is the null tableau.)  Let $T \in \mathcal{T}_\lambda$.  Let $(l,k) \in \lambda$.  As in Section 4 of \cite{PW1}, define $U^{(l,k)}$ to be the tableau formed from $T$ by finding and removing the scanning paths that begin at $(l,\zeta_l)$ through $(l, k+1)$, and then removing the $1^{st}$ through $l^{th}$ columns of $T$.  (If $l = \lambda_1$, then $U^{(l,k)}$ is the null tableau for any $k \in [\zeta_{\lambda_1}]$.)  Set $S := S(T)$.  Lemma 4.1 of \cite{PW1} states that $S_l(k) = \text{max} \{ T_l(k), m(U^{(l,k)}) \}$.

To reduce clutter in the proofs we write $Y_\lambda(\pi) =: Y$ and $S(T) =: S$.

\begin{prop}\label{prop420.1}Let $\pi \in S^\lambda_n$ and $T \in \mathcal{T}_\lambda$ be such that $T \leq Y_\lambda(\pi)$.  If there exists $(l,k) \in \lambda$ such that $Y_l(k) < m(U^{(l,k)})$, then $\pi$ is $\lambda$-312-containing.  \end{prop}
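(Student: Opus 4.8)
The plan is to prove the contrapositive statement as given: starting from a box $(l,k)$ with $Y_l(k) < m(U^{(l,k)})$, I will produce a value that witnesses the failure of the $R$-312-avoidance criterion of Fact~\ref{fact320.4} for the $R$-chain $B$ attached to $\pi$ by $\pi \mapsto B$. First I would invoke the quoted Lemma~4.1 of \cite{PW1}, $S_l(k) = \max\{T_l(k), m(U^{(l,k)})\}$, together with $T \leq Y$: since $T_l(k) \leq Y_l(k) < m(U^{(l,k)})$, the maximum is the second term, so $S_l(k) = m(U^{(l,k)}) =: v$ with $v > Y_l(k)$. I would then fix a box $(c_0, a_0)$ of $T$ in a column $c_0 > l$ at which this surviving maximum is attained, so that $v = T_{c_0}(a_0) \leq Y_{c_0}(a_0)$ by $T \leq Y$.

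Next I would pass to the $R$-chain $B = (B_0 \subset \cdots \subset B_{r+1})$ of $\pi$, using that a key column $Y_j$ of length $\zeta_j = q_h$ has value set $B(Y_j) = B_h$, with entries the ranks of $B_h$. Writing $q_g := \zeta_{c_0}$, so $B(Y_{c_0}) = B_g$, the plan is to aim for the failure of Fact~\ref{fact320.4} at the index $h := g$ with skipped value $w := v$; equivalently, to produce positions $a \leq q_g < b \leq q_{g+1} < c$ of $\pi$ with $\pi_a = m_g > v = \pi_c > b_{g+1} = \pi_b$, where $m_g := \max B_g$ and $b_{g+1} := \min(B_{g+1} \setminus B_g)$ as in Fact~\ref{fact320.3}. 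The bound $v \leq Y_{c_0}(a_0) \leq \max B(Y_{c_0}) = m_g$ is immediate, and strictness $v < m_g$ falls out once the exclusion $v \notin B_{g+1}$ is known, since $m_g \in B_g \subseteq B_{g+1}$.

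The decisive points are that $v$ exceeds the freshly introduced value $b_{g+1}$ and that $v \notin B_{g+1}$; the latter is what places $v$ in a carrel strictly beyond the $(g+1)^{st}$, furnishing the position $c > q_{g+1}$. Here I would exploit the reason $v$ survived: it was not absorbed by any of the scanning (EWIS) paths emanating from $(l, k+1), \ldots, (l, \zeta_l)$ that are deleted in forming $U^{(l,k)}$. Because an earliest weakly increasing subsequence breaks at a strict descent, the box $(c_0, a_0)$ can be left behind only because a strictly larger value was carried off ahead of it; since $v > Y_l(k) \geq T_l(k)$, an occurrence of $v$ among the longer key columns (those of length $q_{g+1}$, lying left of $c_0$) would instead have allowed one of the deleted paths to absorb the box holding $v$, contradicting $v = m(U^{(l,k)})$. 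This same blocking analysis pins the obstructing value at $\geq m_g$ and shows $v$ sits strictly above the bottom fill of the $g^{th}$ carrel, giving $v > b_{g+1}$. I expect this scanning-path blocking argument, converting ``$v$ was not caught'' into ``$v$ is genuinely skipped by the first $g+1$ carrels of the key,'' to be the main obstacle; the remainder is bookkeeping with the $\pi \leftrightarrow B$ dictionary. With $b_{g+1} < v < m_g$ and $v \notin B_{g+1}$ established, the open interval $(b_{g+1}, m_g)$ fails to lie in $B_{g+1}$, so $B$ violates Fact~\ref{fact320.3}(ii) at $h = g$; equivalently, by Fact~\ref{fact320.4}, $\pi$ is $\lambda$-312-containing.
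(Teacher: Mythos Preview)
Your plan diverges from the paper's at exactly the step you flag as the main obstacle, and that step does not go through as written. The blocking argument conflates the key $Y$ with the tableau $T$: the assertion ``$v \in B_{g+1}$'' says that $v$ occurs in the \emph{key} columns of length $q_{g+1}$, but the scanning paths you want to invoke live entirely in $T$. From $T \leq Y$ one only gets $T_{j'}(i') \leq Y_{j'}(i')$, which gives no occurrence of $v$ in the relevant $T$-columns, so ``$v$ would have been absorbed'' has no force. A second difficulty is that you tie the carrel index $g$ to $\zeta_{c_0}$, i.e.\ to where the maximum happens to sit in $T$; nothing pins an $R$-312 pattern of $\pi$ to that particular carrel boundary, so the claims $v>b_{g+1}$ and $v\notin B_{g+1}$ need not hold at this $g$ even when $\pi$ is $\lambda$-312-containing. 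Indeed, if $\zeta_l=\zeta_{c_0}=q_g$ then all columns of length $q_{g+1}$ lie to the left of $l$ and are invisible to the scanning computation for $S_l$, so the blocking picture cannot even be set up; and if $g=r$ then $B_{g+1}=[n]$, so $v\notin B_{g+1}$ is impossible.

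The paper argues quite differently and never attempts to locate the pattern directly from scanning data. It first takes $(l,k)$ to be the \emph{earliest} offending box in the right-to-left, bottom-to-top order; this minimality buys the crucial control $S_l(f)\leq Y_l(f)$ for every $f>k$. It then assumes $\pi$ is $\lambda$-312-avoiding and invokes Theorem~\ref{theorem340}(ii) to conclude that $Y$ is a gapless $\lambda$-key. The gapless property forces the column $Y_l$ to contain the entire interval $[Y_l(k),\,Y_j(i)]$, hence in particular the value $T_j(i)=m(U^{(l,k)})$; so $Y_l(f)=T_j(i)$ for some $f>k$. But then $S_l(f)>S_l(k)\geq T_j(i)=Y_l(f)$, contradicting the minimality consequence. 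The key maneuver you are missing is this use of the already-established structural equivalence (312-avoiding $\Leftrightarrow$ gapless key) together with a well-chosen extremal $(l,k)$, rather than a bare scanning-path argument.
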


\begin{proof}Reading the columns from right to left and then each column from bottom to top, let $(l,k)$ be the first location in $\lambda$ such that $m(U^{(l,k)}) > Y_l(k)$.  In the rightmost column we have $m(U^{(\lambda_1,i)}) = 1$ for all $i \in [\zeta_{\lambda_1}]$.  Thus $m(U^{(\lambda_1,i)}) \leq Y_{\lambda_1}(i)$ for all $i \in [\zeta_{\lambda_1}]$.  So we must have $l \in [1, \lambda_1)$.  There exists $j > l$ and $i \leq k$ such that $m(U^{(l,k)}) = T_j(i)$.  Since $T \leq Y$, so far we have $Y_l(k) < T_j(i) \leq Y_j(i)$.  Note that since $Y$ is a key we have $k < \zeta_l$.  Then for $k < f \leq \zeta_l$ we have $m(U^{(l,f)}) \leq Y_l(f)$.  So $T \leq Y$ implies that $S_l(f) \leq Y_l(f)$ for $k < f \leq \zeta_l$.

Assume for the sake of contradiction that $\pi$ is $\lambda$-312-avoiding.  Theorem \ref{theorem340}(ii) says that its $\lambda$-key $Y$ is gapless.  If the value $Y_l(k)$ does not appear in $Y_j$, then the columns that contain $Y_l(k)$ must also contain $[Y_l(k), Y_j(i)]$:  Otherwise, the rightmost column that contains $Y_l(k)$ has index $\lambda_{q_{h+1}}$ for some $h \in [r-1]$ and there exists some $u \in [Y_l(k), Y_j(i)]$ such that $u \notin Y_{\lambda_{q_{h+1}}}$.  Then $Y$ would not satisfy the definition of gapless $\lambda$-key, since for this $h+1$ in that definition one has $b \leq u$ and $u \leq m$.  If the value $Y_l(k)$ does appear in $Y_j$, it appears to the north of $Y_j(i)$ there.  Then $i \leq k$ implies that some value $Y_l(f) < Y_j(i)$ with $f < k$ does not appear in $Y_j$.  As above, the columns that contain the value $Y_l(f) < Y_l(k)$ must also contain $[Y_l(f), Y_j(i)]$.  In either case $Y_l$ must contain $[Y_l(k), Y_j(i)]$.  This includes $T_j(i)$.

Now let $f > k$ be such that $Y_l(f) = T_j(i)$.  Then we have $S_l(f) > S_l(k) = \text{max} \{T_l(k),m(U^{(l,k)}) \}$ $\geq T_j(i) = Y_l(f)$.  This is our desired contradiction.  \end{proof}

As in Section 5 of \cite{PW1}: When $m(U^{(l,k)}) > Y_l(k)$, define the set $A_\lambda(T,\pi;l,k) := \emptyset$.  Otherwise, define $A_\lambda(T,\pi;l,k) := [ k , \text{min} \{ Y_l(k), T_l(k+1) -1, T_{l+1}(k) \} ] $.  (Refer to fictitious bounding values $T_l(\zeta_l + 1) := n+1$ and $T_{\lambda_l + 1}(l) := n$.)

\begin{thm}\label{theorem420}Let $\lambda$ be a partition and $\pi$ be a $\lambda$-permutation.  If $\pi$ is $\lambda$-312-avoiding, then $\mathcal{D}_\lambda(\pi) = [Y_\lambda(\pi)]$.  \end{thm}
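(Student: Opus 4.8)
The plan is to prove the reverse inclusion $[Y_\lambda(\pi)] \subseteq \mathcal{D}_\lambda(\pi)$, since Fact \ref{fact420}(iii) already supplies $\mathcal{D}_\lambda(\pi) \subseteq [Y_\lambda(\pi)]$; combining the two inclusions yields the asserted equality. Writing $Y := Y_\lambda(\pi)$ and $S := S(T)$ as in the surrounding text, I would fix an arbitrary $T \in [Y_\lambda(\pi)]$, so that $T \leq Y$ under the entrywise order, and aim to show $S(T) \leq Y$, which is precisely the membership condition defining $T \in \mathcal{D}_\lambda(\pi)$.

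The heart of the argument is to bound each scanning value $S_l(k)$ above by $Y_l(k)$, box by box. First I would invoke the formula $S_l(k) = \max\{ T_l(k), m(U^{(l,k)}) \}$ recorded just before the statement of Proposition \ref{prop420.1} (Lemma 4.1 of \cite{PW1}), which reduces the task to bounding the two terms inside the maximum. The first term is immediate: $T \leq Y$ gives $T_l(k) \leq Y_l(k)$ for every box $(l,k) \in \lambda$. The second term is exactly where the hypothesis enters. Since $\pi$ is $\lambda$-312-avoiding, the contrapositive of Proposition \ref{prop420.1} (whose hypothesis $T \leq Y_\lambda(\pi)$ is in force) asserts that no box $(l,k)$ can satisfy $Y_l(k) < m(U^{(l,k)})$; hence $m(U^{(l,k)}) \leq Y_l(k)$ for every $(l,k) \in \lambda$.

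Combining the two bounds gives $S_l(k) = \max\{ T_l(k), m(U^{(l,k)}) \} \leq Y_l(k)$ at every box, so $S(T) \leq Y$, i.e. $T \in \mathcal{D}_\lambda(\pi)$. This establishes $[Y_\lambda(\pi)] \subseteq \mathcal{D}_\lambda(\pi)$ and, together with Fact \ref{fact420}(iii), the desired equality $\mathcal{D}_\lambda(\pi) = [Y_\lambda(\pi)]$.

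I expect essentially no obstacle at this final stage: all of the genuine combinatorial work has already been pushed into Proposition \ref{prop420.1}, whose proof exploits the gaplessness of the key $Y$ (equivalently, Theorem \ref{theorem340}(ii)) to force the relevant intervals $[Y_l(k), Y_j(i)]$ into the columns of $Y$. The one point I would check carefully is that the scanning-value formula and Proposition \ref{prop420.1} are being applied to the very same objects $U^{(l,k)}$, with matching conventions for the null tableau and the boundary columns, so that the maximum $\max\{ T_l(k), m(U^{(l,k)}) \}$ is legitimately bounded term-by-term with no edge cases left over near the rightmost column (where $m(U^{(\lambda_1,i)}) = 1 \leq Y_{\lambda_1}(i)$ holds trivially).
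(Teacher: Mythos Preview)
Your proof is correct and its overall strategy matches the paper's: both directions rest on Fact~\ref{fact420}(iii) for the easy inclusion and on the contrapositive of Proposition~\ref{prop420.1} for the substantive one, so the real work has indeed been absorbed into that proposition.

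Where you diverge slightly from the paper is in the final step. The paper routes through the auxiliary sets $A_\lambda(T,\pi;l,k)$: the contrapositive of Proposition~\ref{prop420.1} forces each $A_\lambda(T,\pi;l,k)$ to be the nonempty interval $[\,k,\ \min\{Y_l(k),\,T_l(k+1)-1,\,T_{l+1}(k)\}\,]$, one checks $T_l(k)$ lies in that interval, and then Theorem~5.1 of \cite{PW1} is invoked to conclude $T \in \mathcal{D}_\lambda(\pi)$. You instead go straight from the scanning formula $S_l(k) = \max\{T_l(k),\,m(U^{(l,k)})\}$ (Lemma~4.1 of \cite{PW1}, already quoted in the text) and the bound $m(U^{(l,k)}) \leq Y_l(k)$ to $S(T) \leq Y$, which is the defining condition for $\mathcal{D}_\lambda(\pi)$. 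Your route is a touch more self-contained, since it uses only the scanning formula rather than the additional characterization Theorem~5.1 of \cite{PW1}; the paper's route, on the other hand, makes explicit the local box-by-box membership criterion $T_l(k) \in A_\lambda(T,\pi;l,k)$, which is the form needed elsewhere in \cite{PW1} and \cite{PW3}.
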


\begin{proof}The easy containment $\mathcal{D}_\lambda(\pi) \subseteq [Y_\lambda(\pi)]$ is Fact \ref{fact420}(iii).  Conversely, let $T \leq Y$ and $(l,k) \in \lambda$.  The contrapositive of Proposition \ref{prop420.1} gives $A_\lambda(T,\pi;l,k) = [ k , \text{min} \{ Y_l(k), T_l(k+1) - 1, T_{l+1}(k) \} ]$.  Since $T \leq Y$, we see that $T_l(k) \in A_\lambda(T,\pi;l,k)$ for all $(l,k) \in \lambda$.  Theorem 5.1 of \cite{PW1} says that $T \in \mathcal{D}_\lambda(\pi)$.  \end{proof}

\noindent This result is used in \cite{PW3} to prove Theorem 9.1(ii).

\section{Necessary condition for Demazure convexity}

Continue to fix a $\lambda$-permutation $\pi$.  We show that $\pi$ must be $\lambda$-312-avoiding for the set of Demazure tableaux $\mathcal{D}_\lambda(\pi)$ to be a convex polytope in $\mathbb{Z}^{|\lambda|}$.  We do so by showing that if $\pi$ is $\lambda$-312-containing, then $\mathcal{D}_\lambda(\pi)$ does not contain a particular semistandard tableau that lies on the line segment defined by two particular keys that are in $\mathcal{D}_\lambda(\pi)$.

\begin{thm}\label{theorem520}Let $\lambda$ be a partition and let $\pi$ be a $\lambda$-permutation.  If $\mathcal{D}_\lambda(\pi)$ is convex in $\mathbb{Z}^{|\lambda|}$, then $\pi$ is $\lambda$-312-avoiding.  \end{thm}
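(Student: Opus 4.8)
The statement is the converse direction of the convexity characterization: I must show that if $\mathcal{D}_\lambda(\pi)$ is convex in $\mathbb{Z}^{|\lambda|}$, then $\pi$ is $\lambda$-312-avoiding. The natural strategy is to prove the contrapositive, exactly as the section's introductory paragraph suggests: assuming $\pi$ is $\lambda$-312-containing, I will exhibit two keys lying in $\mathcal{D}_\lambda(\pi)$ together with a semistandard tableau $T$ on the integer line segment between them (i.e.\ a componentwise average, or more generally a lattice point in their convex hull) such that $T \notin \mathcal{D}_\lambda(\pi)$. This single witness of nonconvexity completes the proof.

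\textbf{Locating the pattern and building the keys.} First I would use the defining $\lambda$-312-containment to fix an $h \in [r-1]$ and indices $a \le q_h < b \le q_{h+1} < c$ with $\pi_a > \pi_c$, $\pi_b < \pi_c$, and $\pi_b < \pi_a$. I would translate this into the language of the $\lambda$-key $Y := Y_\lambda(\pi)$ and its column value sets $B(Y_j)$, using the bijections $\pi \mapsto B \mapsto Y$ from Section 3 and the fact (contrapositive of Theorem~\ref{theorem340}(ii)) that $Y$ fails to be a gapless $\lambda$-key. Concretely, the failure of the gapless condition produces a ``gap'': a value missing from some column of length $q_{h+1}$ that lies strictly between a smaller value $b$ present in that column and the bottom value $m$ of the columns of length $q_h$. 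I would then construct two explicit keys $Y', Y'' \in \mathcal{D}_\lambda(\pi)$ — presumably $Y$ itself and a carefully chosen second key obtained by a controlled modification across the dividing cliff — so that their componentwise convex combination admits an intermediate semistandard tableau $T$. The key design constraint is that both $Y'$ and $Y''$ must satisfy $S(Y') \le Y$ and $S(Y'') \le Y$ (equivalently, by Fact~\ref{fact420}(ii), be keys dominated by $Y$), so that they genuinely lie in $\mathcal{D}_\lambda(\pi)$.

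\textbf{The intermediate tableau and the scanning obstruction.} Having fixed $Y', Y''$, I would define $T$ to be a specific lattice tableau between them and then show $T \notin \mathcal{D}_\lambda(\pi)$ by proving that its scanning tableau violates the defining inequality $S(T) \le Y$. This is where I expect the real work to be, and it is the natural place to invoke the scanning-path machinery: the formula $S_l(k) = \max\{T_l(k), m(U^{(l,k)})\}$ from Lemma 4.1 of \cite{PW1} lets me compute the relevant entry of $S(T)$. The idea is that placing the gap-straddling values in a genuinely intermediate (non-key) configuration forces a scanning path to carry a large value — coming from the $\pi_a > \pi_c$ part of the pattern — into a box $(l,k)$ where $Y_l(k)$ is small because of the gap, yielding $S_l(k) > Y_l(k)$. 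This is essentially the mechanism appearing in Proposition~\ref{prop420.1}, run in the reverse direction.

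\textbf{Main obstacle.} The principal difficulty will not be the nonconvexity logic, which is immediate once the witness is produced, but rather the \emph{explicit construction} of $Y'$, $Y''$, and the intermediate $T$, together with the verification that $T$ is semistandard, lies in the convex hull of $Y'$ and $Y''$, and has $S(T) \not\le Y$. Controlling the scanning paths of $T$ precisely enough to locate the single box where $S(T)$ exceeds $Y$ is delicate, because scanning paths can wander across many columns. I would expect to localize the analysis to the columns of lengths $q_h$ and $q_{h+1}$ near the cliff identified by the 312-pattern, and to choose the differing entries of $Y'$ and $Y''$ to be as small as possible (differing by exactly $1$ in a single row, say) so that the midpoint $T$ differs from a key in a minimal, trackable way. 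The gapless-key characterization of Theorem~\ref{theorem340}(ii) is the crucial structural input that guarantees the gap exists and pins down where the scanning obstruction must occur.
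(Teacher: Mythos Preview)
Your high-level strategy---contrapositive, two keys in $\mathcal{D}_\lambda(\pi)$, an integer tableau on the segment between them whose scanning tableau exceeds $Y$---is exactly the paper's. But your concrete suggestions contain a real obstruction and miss the main technical device.

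First, ``differing by exactly $1$ in a single row'' cannot work: if $Y'$ and $Y''$ differ by $1$ in each of several boxes and agree elsewhere, the only lattice points on the segment $Y' + t(Y''-Y')$ are the two endpoints, so there is no intermediate $T \in \mathbb{Z}^{|\lambda|}$ to exhibit. The two keys must differ by at least $2$ in the varying boxes, and the intermediate value placed there must itself be meaningful for the pattern.

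Second, the paper does not detour through the gapless-key reformulation. It works directly with the $312$-pattern in $\pi$, but with a carefully \emph{optimized} instance: $h$ minimal, then $\pi_b$ maximal, then $\pi_a$ minimal, then $g$ minimal, plus an auxiliary index $d \leq q_g$ with $\pi_d$ maximal in $(\pi_b,\pi_c)$ (or $d=b$). These extremality choices carve out ``prohibited rectangles'' in the graph of $\pi$---no entries in certain value ranges over certain index intervals---and these prohibitions are exactly what make the subsequent construction verifiable. The two endpoint keys are then $\lambda$-keys of permutations obtained from $\pi$ by \emph{swapping entries}: swapping $\pi_b$ with $\pi_d$ gives a key $X \leq Y$; a further swap of $\pi_a$ with $\pi_d$ gives $W < X$. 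On the region $\mu$ where $W$ and $X$ differ, $X$ carries $\pi_a$ and $W$ carries $\pi_d$; the intermediate tableau $T$ places $\pi_c$ (strictly between them) on $\mu$. The prohibited rectangles are precisely what ensure $T$ is semistandard and that a scanning path starting in the column just left of $\mu$ must enter $\mu$ and pick up a value $\geq \pi_c$, forcing $S_j(i) \geq \pi_c > \pi_d = Y_j(i)$.

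So the missing idea is not the convexity logic---which you have right---but the optimized choice of the $312$-instance and the permutation-swap construction of the endpoint keys. Without those extremality assumptions you will not be able to certify that $T$ is semistandard or to control where its scanning paths go; an arbitrary $312$-instance and an arbitrary pair of dominated keys will not suffice.
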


\noindent This result is used in \cite{PW3} to prove Theorem 9.1(iii) and Theorem 10.3.

\begin{proof}For the contrapositive, assume that $\pi$ is $\lambda$-312-containing.  Here $r := |R_\lambda| \geq 2$.  There exists $1 \leq g < h \leq r$ and some $a \leq q_g < b \leq q_h < c$ such that $\pi_b < \pi_c < \pi_a$.  Among such patterns, we specify one that is optimal for our purposes.  Figure 7.1 charts the following choices for $\pi = (4,8;9;2,3;1,5;6,7)$ in the first quadrant.  Choose $h$ to be minimal.  So $b \in (q_{h-1}, q_h]$.  Then choose $b$ so that $\pi_b$ is maximal.  Then choose $a$ so that $\pi_a$ is minimal.  Then choose $g$ to be minimal.  So $a \in (q_{g-1} , q_g]$.  Then choose any $c$ so that $\pi_c$ completes the $\lambda$-312-containing condition.

These choices have led to the following two prohibitions; see the rectangular regions in Figure 7.1:

\noindent (i) By the minimality of $h$ and the maximality of $\pi_b$, there does not exist $e \in (q_g, q_h]$ such that $\pi_b < \pi_e < \pi_c$.

\noindent (ii) By the minimality of $\pi_a$, there does not exist $e \in [q_{h-1}]$ such that $\pi_c < \pi_e < \pi_a$.

\noindent If there exists $e \in [q_g]$ such that $\pi_b < \pi_e < \pi_c$, choose $d \in [q_g]$ such that $\pi_d$ is maximal with respect to this condition; otherwise set $d = b$.  So $\pi_b \leq \pi_d$ with $d \leq b$.  We have also ruled out:

\noindent (iii) By the maximality of $\pi_d$, there does not exist $e \in [q_g]$ such that $\pi_d < \pi_e < \pi_c$.

\begin{figure}[h!]
  \begin{center}
    \includegraphics[scale=1]{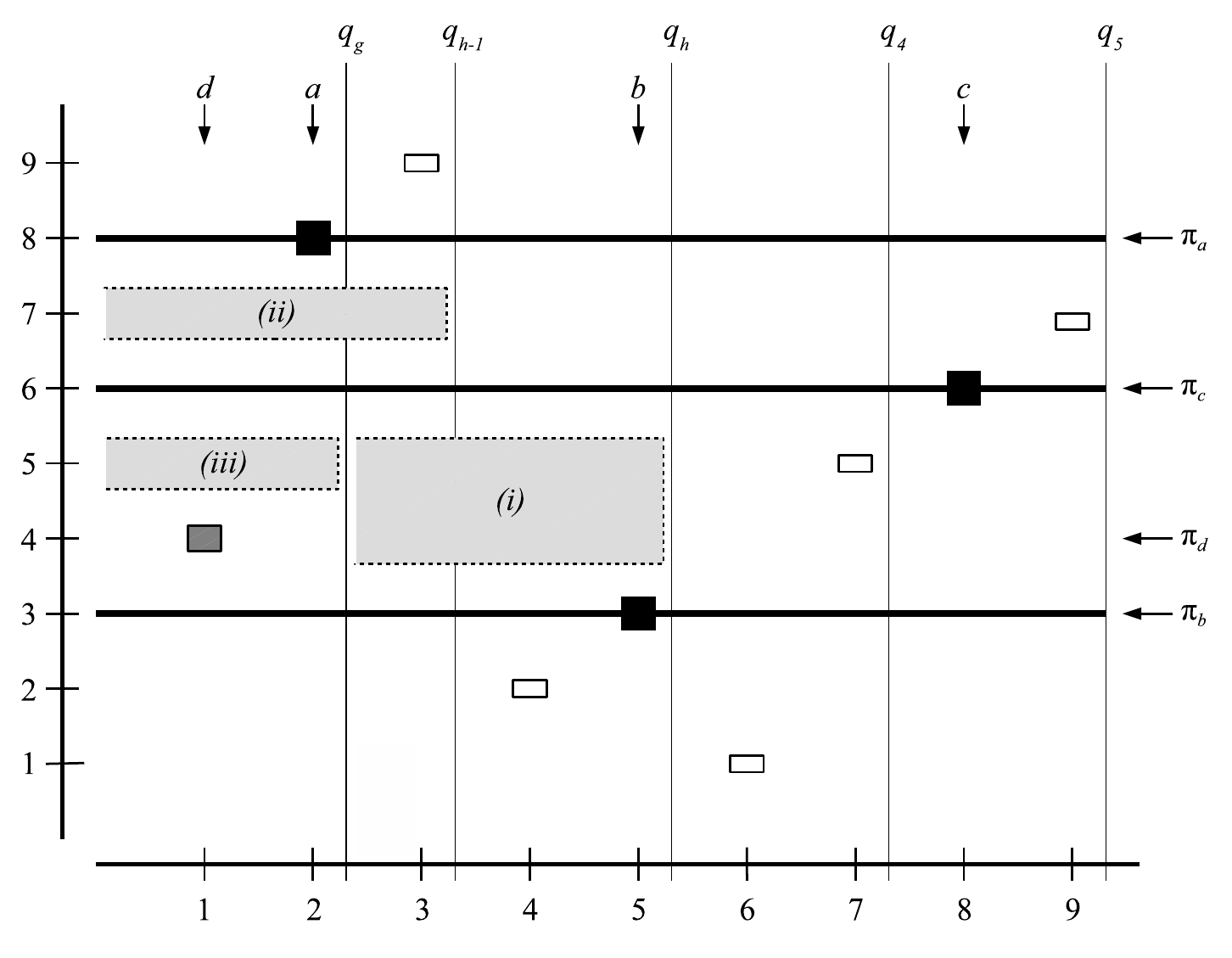}\label{fig71}
    \caption*{Figure 7.1.  Prohibited regions (i), (ii), and (iii) for $\pi = (4,8;9;2,3;1,5;6,7)$.}
  \end{center}
\end{figure}

Set $Y := Y_\lambda(\pi)$.  Now let $\chi$ be the permutation resulting from swapping the entry $\pi_b$ with the entry $\pi_d$ in $\pi$; so $\chi_b := \pi_d, \chi_d := \pi_b$, and $\chi_e := \pi_e$ when $e \notin \{ b, d \}$.  (If $d = b$, then $\chi = \pi$ with $\chi_b = \pi_b = \chi_d = \pi_d$.)  Let $\bar{\chi}$ be the $\lambda$-permutation produced from $\chi$ by sorting each cohort into increasing order.  Set $X := Y_\lambda(\bar{\chi})$.  Let $j$ denote the column index of the rightmost column with length $q_h$; so the value $\chi_b = \pi_d$ appears precisely in the $1^{st}$ through $j^{th}$ columns of $X$.  Let $f \leq h$ be such that $d \in (q_{f-1}, q_f]$, and let $k \geq j$ denote the column index of the rightmost column with length $q_f$.  The swap producing $\chi$ from $\pi$ replaces $\pi_d = \chi_b$ in the $(j+1)^{st}$ through $k^{th}$ columns of $Y$ with $\chi_d = \pi_b$ to produce $X$.  (The values in these columns may need to be re-sorted to meet the semistandard criteria.)  So $\chi_d \leq \pi_d$ implies $X \leq Y$ via a column-wise argument.

Forming the union of the prohibited rectangles for (i), (ii), and (iii), we see that there does not exist $e \in [q_{h-1}]$ such that $\pi_d = \chi_b < \pi_e < \pi_a$.  Thus we obtain:

\noindent (iv) For $l > j$, the $l^{th}$ column of $X$ does not contain any values from $[\chi_b, \pi_a)$.

\noindent Let $(j,i)$ denote the location of the $\chi_b$ in the $j^{th}$ column of $X$ (and hence $Y$).  So $Y_j(i) = \pi_d$.  By (iv) and the semistandard conditions, we have $X_{j+1}(u) = \pi_a$ for some $u \leq i$.  By (i) and (iii) we can see that $X_j(i+1) > \pi_c$.  Let $m$ denote the column index of the rightmost column of $\lambda$ with length $q_g$.  This is the rightmost column of $X$ that contains $\pi_a$.  Let $\mu \subseteq \lambda$ be the set of locations of the $\pi_a$'s in the $(j+1)^{st}$ through $m^{th}$ columns of $X$; note that $(j+1, u) \in \mu$.  Let $\omega$ be the permutation obtained by swapping $\chi_a = \pi_a$ with $\chi_b = \pi_d$ in $\chi$;  so $\omega_a := \chi_b = \pi_d$, $\omega_b := \chi_a = \pi_a$, $\omega_d := \chi_d = \pi_b$, and $\omega_e := \pi_e$ when $e \notin \{ d, a, b \}$.  Let $\bar{\omega}$ be the $\lambda$-permutation produced from $\omega$ by sorting each cohort into increasing order.  Set $W := Y_\lambda(\bar{\omega})$.  By (iv), obtaining $\omega$ from $\chi$ is equivalent to replacing the $\pi_a$ at each location of $\mu$ in $X$ with $\chi_b$ (and leaving the rest of $X$ unchanged) to obtain $W$.  So $\chi_b < \pi_a$ implies $W < X$.

Let $T$ be the result of replacing the $\pi_a$ at each location of $\mu$ in $X$ with $\pi_c$ (and leaving the rest unchanged).  So $T < X \leq Y$.  See the conceptual Figure 7.2 for $X$ and $T$; the shaded boxes form $\mu$.  In particular $T_{j+1}(u) = \pi_c$.  This $T$ is not necessarily a key; we need to confirm that it is semistandard.  For every $(q,p) \notin \mu$ we have $W_q(p) = T_q(p) = X_q(p)$.  By (iv), there are no values in $X$ in any column to the right of the $j^{th}$ column from $[\pi_c, \pi_a)$.  The region $\mu$ is contained in these columns.  Hence we only need to check semistandardness when moving from the $j^{th}$ column to $\mu$ in the $(j+1)^{st}$ column.  Here $u \leq i$ implies $T_j(u) \leq T_j(i) = \pi_d < \pi_c = T_{j+1}(u)$.  So $T \in \mathcal{T}_\lambda$.

\begin{figure}[h!]
  \begin{center}
    \includegraphics[scale=1]{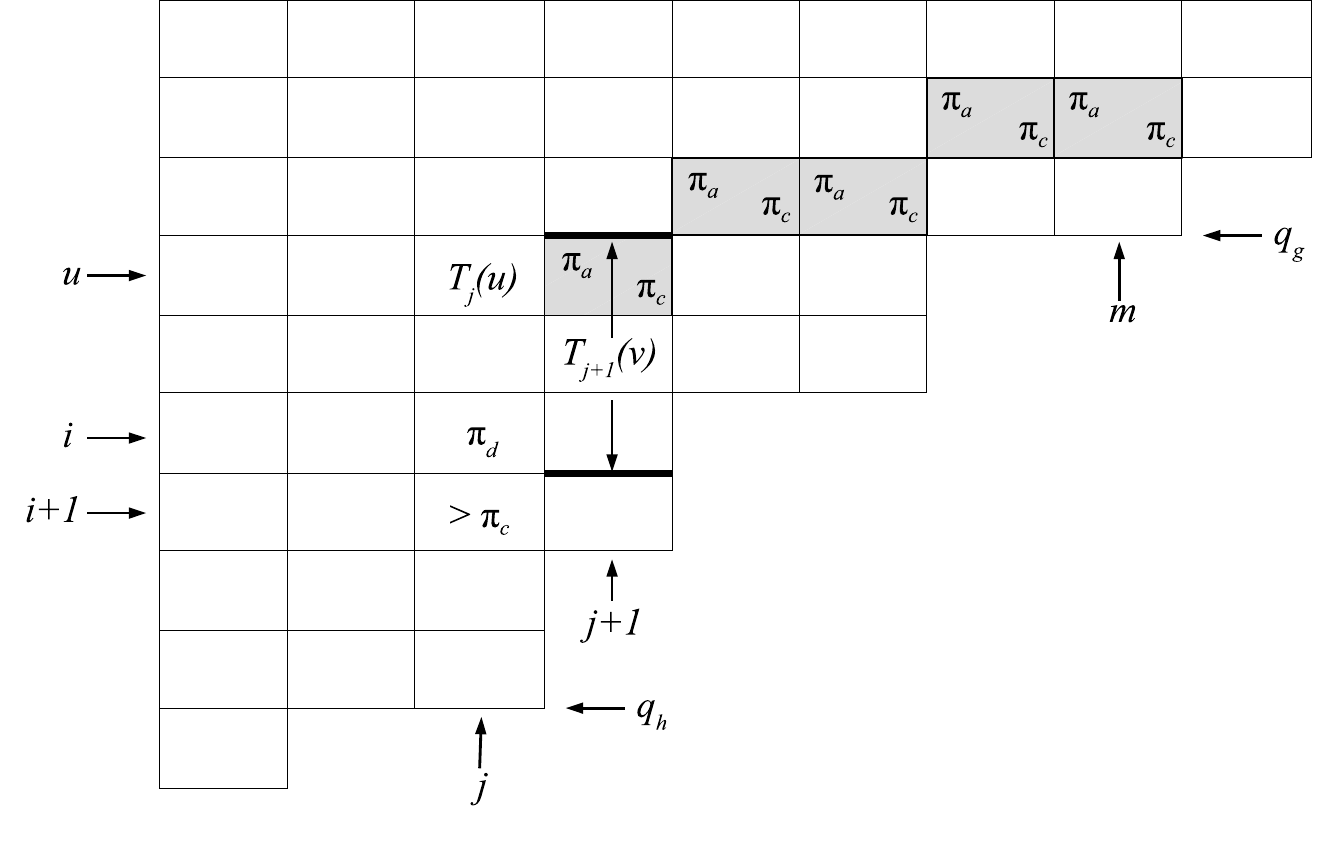}
    \caption*{Figure 7.2.  Values of $X$ (respectively $T$) are in upper left (lower right) corners.}
  \end{center}
\end{figure}

Now we consider the scanning tableau $S(T) =: S$ of $T$:  Since $(j, i+1) \notin \mu$, we have $T_j(i+1) = X_j(i+1)$.  Since $X_j(i+1) > \pi_c = T_{j+1}(u)$, the location $(j+1,u)$ is not in a scanning path $\mathcal{P}(T;j,i^\prime)$ for any $i^\prime > i$.  Since $T_j(i) = \chi_b = \pi_d < \pi_c$, the location $(j+1,v)$ is in $\mathcal{P}(T;j,i)$ for some $v \in [u,i]$.  By the semistandard column condition one has $T_{j+1}(v) \geq T_{j+1}(u) = \pi_c$.  Thus $S_j(i) \geq \pi_c > \chi_b = \pi_d = Y_j(i)$.  Hence $S(T) \nleq Y$, and so $T \notin \mathcal{D}_\lambda(\pi)$.  Since $T \in [Y]$, we have $\mathcal{D}_\lambda(\pi) \neq [Y]$.

In $\mathbb{R}^{|\lambda|}$, consider the line segment $U(t) = W + t(X-W)$, where $0 \leq t \leq 1$.  Here $U(0) = W$ and $U(1) = X$.  The value of $t$ only affects the values at the locations in $\mu$.  Let $x := \frac{\pi_c - \chi_b}{\pi_a - \chi_b}$.  Since $\chi_b < \pi_c < \pi_a$, we have $0 < x < 1$.  The values in $\mu$ in $U(x)$ are $\chi_b + \frac{\pi_c - \chi_b}{\pi_a-\chi_b}(\pi_a-\chi_b) = \pi_c$.  Hence $U(x) = T$.  Since $X$ and $W$ are keys, by Fact \ref{fact420}(ii) we have $S(X) = X$ and $S(W) = W$.  Then $W < X \leq Y$ implies $W \in \mathcal{D}_\lambda(\pi)$ and $X \in \mathcal{D}_\lambda(\pi)$.  Thus $U(0), U(1) \in \mathcal{D}_\lambda(\pi)$ but $U(x) \notin \mathcal{D}_\lambda(\pi)$.  If a set $\mathcal{E}$ is a convex polytope in $\mathbb{Z}^N$ and $U(t)$ is a line segment with $U(0), U(1) \in \mathcal{E}$, then $U(t) \in \mathcal{E}$ for any $0 < t < 1$ such that $U(t) \in \mathbb{Z}^N$.  Since $0 < x < 1$ and $U(x) = T \in \mathbb{Z}^{|\lambda|}$ with $U(x) \notin \mathcal{D}_\lambda(\pi)$, we see that $\mathcal{D}_\lambda(\pi)$ is not a convex polytope in $\mathbb{Z}^{|\lambda|}$.  \end{proof}

When one first encounters the notion of a Demazure polynomial, given Facts \ref{fact420}(iii)(iv) it is natural to ask when $\mathcal{D}_\lambda(\pi;x)$ is simply all of the ideal $[Y_\lambda(\pi)]$.  Since principal ideals in $\mathcal{T}_\lambda$ are convex polytopes in $\mathbb{Z}^{|\lambda|}$, we can answer this question while combining Theorems \ref{theorem420} and \ref{theorem520}:

\begin{cor}\label{cor520}Let $\pi \in S_n^\lambda$. The set $\mathcal{D}_\lambda(\pi)$ of Demazure tableaux of shape $\lambda$ is a convex polytope in $\mathbb{Z}^{|\lambda|}$ if and only if $\pi$ is $\lambda$-312-avoiding if and only if $\mathcal{D}_\lambda(\pi) = [Y_\lambda(\pi)]$.  \end{cor}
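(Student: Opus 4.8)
The plan is to close a cycle of three implications among the statements
\[
\textbf{(A) } \mathcal{D}_\lambda(\pi)\text{ is a convex polytope in } \mathbb{Z}^{|\lambda|}, \qquad \textbf{(B) } \pi\text{ is }\lambda\text{-}312\text{-avoiding}, \qquad \textbf{(C) } \mathcal{D}_\lambda(\pi) = [Y_\lambda(\pi)],
\]
using the two main theorems as the heavy machinery and supplying only one trivial linking step. The implication (B)$\Rightarrow$(C) is exactly the content of Theorem \ref{theorem420}, and the implication (A)$\Rightarrow$(B) is exactly Theorem \ref{theorem520}, so the entire burden of the argument has already been discharged there.

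What remains is to provide the middle implication (C)$\Rightarrow$(A), which closes the loop and makes all three conditions equivalent. Here I would simply invoke the observation recorded in Section 3 that every principal ideal $[T]$ in the lattice $\mathcal{T}_\lambda \cong L(\lambda,n)$ is a convex polytope in $\mathbb{Z}^{|\lambda|}$: the ideal $\{U \in \mathcal{T}_\lambda : U \leq T\}$ is cut out by the semistandardness inequalities together with the finitely many entrywise bounds $U_j(i) \leq T_j(i)$, hence is the solution set of a finite system of linear inequalities. Taking $T := Y_\lambda(\pi)$, hypothesis (C) identifies $\mathcal{D}_\lambda(\pi)$ with the principal ideal $[Y_\lambda(\pi)]$, which is therefore convex. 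Assembling (A)$\Rightarrow$(B)$\Rightarrow$(C)$\Rightarrow$(A) yields the equivalence of all three conditions.

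I expect there to be no genuine obstacle in this corollary: the two difficult directions are precisely the preceding theorems, whose proofs (the scanning-tableau analysis of Proposition \ref{prop420.1} together with Theorem \ref{theorem420}, and the explicit line-segment witness of Theorem \ref{theorem520}) carry all the combinatorial weight. The only point meriting a sentence of care is the middle step, where one should note that it is the specific ideal generated by $Y_\lambda(\pi)$ that is invoked, and that the convexity of principal ideals is the elementary fact already asserted in Section 3 rather than something requiring the $\lambda$-312-avoidance hypothesis. The structure of the write-up, then, is three short sentences: cite Theorem \ref{theorem420} for (B)$\Rightarrow$(C), cite the principal-ideal convexity for (C)$\Rightarrow$(A), and cite Theorem \ref{theorem520} for (A)$\Rightarrow$(B).
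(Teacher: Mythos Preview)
Your proposal is correct and matches the paper's approach exactly: the paper also derives the corollary by combining Theorem \ref{theorem420} for (B)$\Rightarrow$(C), Theorem \ref{theorem520} for (A)$\Rightarrow$(B), and the Section 3 observation that principal ideals in $\mathcal{T}_\lambda$ are convex polytopes for (C)$\Rightarrow$(A). The paper in fact does not even write out a separate proof, simply noting in the sentence preceding the corollary that one combines the two theorems with the convexity of principal ideals.
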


\noindent When $\lambda$ is the strict partition $(n,n-1, ..., 2,1)$, this convexity result appeared as Theorem 3.9.1 in \cite{Wi1}.

\section{Potential applications of convexity}

In addition to providing the core content needed to prove the main results of \cite{PW3}, our convexity results might later be useful in some geometric or representation theory contexts.  Our re-indexing of the $R$-312-avoiding phenomenon with gapless $R$-tuples could also be useful.   Fix $R \subseteq [n-1]$; inside $G := GL_n(\mathbb{C})$ this determines a parabolic subgroup $P := P_R$.   If $R = [n-1]$ then $P$ is the Borel subgroup $B$ of $G$.  Fix $\pi \in S_n^R$; this specifies a Schubert variety $X(\pi)$ of the flag manifold $G \slash P$.

Pattern avoidance properties for $\pi$ have been related to geometric properties for $X(\pi)$:  If $\pi \in S_n$ is 3412-avoiding and 4231-avoiding, then the variety $X(\pi) \subseteq G \slash B$ is smooth by Theorem 13.2.2.1 of \cite{LR}.  Since a 312-avoiding $\pi$ satisfies these conditions, its variety $X(\pi)$ is smooth.  Postnikov and Stanley \cite{PS} noted that Lakshmibai called these the ``Kempf'' varieties.  It could be interesting to extend the direct definition of the notion of Kempf variety from $G \slash B$ to all $G \slash P$, in contrast to using the indirect definition for $G \slash P$ given in \cite{HL}.

Berenstein and Zelevinsky \cite{BZ} emphasized the value of using the points in convex integral polytopes to describe the weights-with-multiplicities of representations.  Fix a partition $\lambda$ of some $N \geq 1$ such that $R_\lambda = R$.  Rather than using the tableaux in $\mathcal{T}_\lambda$ to describe the irreducible polynomial character of $G$ with highest weight $\lambda$ (Schur function of shape $\lambda$), the corresponding Gelfand-Zetlin patterns (which have top row $\lambda$) can be used.  These form an integral polytope in $\mathbb{Z}^{n \choose 2}$ that is convex.  In Corollary 15.2 of \cite{PS}, Postnikov and Stanley formed convex polytopes from certain subsets of the GZ patterns with top row $\lambda$; these had been considered by Kogan.  They summed the weights assigned to the points in these polytopes to obtain the Demazure polynomials $d_\lambda(\pi;x)$ that are indexed by the 312-avoiding permutations.  The convex integral polytope viewpoint was used there to describe the degree of the associated embedded Schubert variety $X(\pi)$ in the full flag manifold $G \slash B$.

Now assume that $\lambda$ is strict.  Here $R = [n-1]$ and the $R$-312-avoiding permutations are the 312-avoiding permutations.  The referee of \cite{PW3} informed us that Kiritchenko, Smirnov, and Timorin generalized Corollary 15.2 of \cite{PS} to express \cite{KST} the polynomial $d_\lambda(\pi;x)$ for any $\pi \in S_n^\lambda$ as a sum over the points in certain faces of the GZ polytope for $\lambda$ that are determined by $\pi$.  Only one face is used exactly when $\pi$ is 312-avoiding.  At a glance it may appear that their Theorem 1.2 implies that the set of points used from the GZ integral polytope for $d_\lambda(\pi;x)$ is convex exactly when $\pi$ is 312-avoiding.  So that referee encouraged us to remark upon the parallel 312-avoiding phenomena of convexity in $\mathbb{Z}^N$ for the tableau set $\mathcal{D}_\lambda(\pi)$ and of convexity in $\mathbb{Z}^{n \choose 2}$ for the set of points in these faces.  But we soon saw that when $\lambda$ is small it is possible for the union of faces used for $d_\lambda(\pi;x)$ to be convex even when $\pi$ is not 312-avoiding.  See Section 12 of \cite{PW3} for a counterexample.  To obtain convexity, one must replace $\lambda$ by $m\lambda$ for some $m \geq 2$.  In contrast, our Corollary \ref{cor520} holds for all $\lambda$.

Postnikov and Stanley remarked that the convex polytope of GZ patterns in the 312-avoiding case was used by Kogan and Miller to study the toric degeneration formed by Gonciulea and Lakshmibai for a Kempf variety.  It would be interesting to see if the convexity characterization of the $R$-312-avoiding Demazure tableau sets $\mathcal{D}_\lambda(\pi)$ found here is related to some nice geometric properties for the corresponding Schubert varieties $X(\pi)$ in $G \slash P$.  For any $R$-permutation $\pi$ the Demazure tableaux are well suited to studying the associated Schubert variety from the Pl{\"u}cker relations viewpoint, as was illustrated by Lax's re-proof \cite{Lax} of the standard monomial basis that used the scanning method of \cite{Wi2}.

\section{Parabolic Catalan counts}

The section (or paper) cited at the beginning of each item in the following statement points to the definition of the concept:

\begin{thm}\label{theorem18.1}Let $R \subseteq [n-1]$.  Write the elements of $R$ as $q_1 < q_2 < ... < q_r$.  Set $q_0 := 0$ and $q_{r+1} := n$.  Let $\lambda$ be a partition $\lambda_1 \geq \lambda_2 \geq ... \geq \lambda_n \geq 0$ whose shape has the distinct column lengths $q_r, q_{r-1}, ... , q_1$.  Set $p_h := q_h - q_{h-1}$ for $1 \leq h \leq r+1$.  The number $C_n^R =: C_n^\lambda$ of $R$-312-avoiding permutations is equal to the number of:

\noindent (i) \cite{GGHP}:  ordered partitions of $[n]$ into blocks of sizes $p_h$ for $1 \leq h \leq r+1$ that avoid the pattern 312, and $R$-$\sigma$-avoiding permutations for $\sigma \in \{ 123, 132, 213, 231, 321 \}$.

\noindent (ii)  Section 2:  multipermutations of the multiset $\{ 1^{p_1}, 2^{p_2}, ... , (r+1)^{p_{r+1}} \}$ that avoid the pattern 231.

\noindent (iii)  Section 2:  gapless $R$-tuples $\gamma \in UG_R(n)$.

\noindent (iv)  Here only:  $r$-tuples $(\mu^{(1)}, ... , \mu^{(r)})$ of shapes such that $\mu^{(h)}$ is contained in a $p_h \times (n-q_h)$ rectangle for $1 \leq h \leq r$ and for $1 \leq h \leq r-1$ the length of the first row in $\mu^{(h)}$ does not exceed the length of the $p_{h+1}^{st}$ (last) row of $\mu^{(h+1)}$ plus the number of times that (possibly zero) last row length occurs in $\mu^{(h+1)}$.

\noindent (v)  Sections 4 and 5:  $R$-rightmost clump deleting chains and gapless $\lambda$-keys.

\noindent (vi)  Section 6:  sets of Demazure tableaux of shape $\lambda$ that are convex in $\mathbb{Z}^{|\lambda|}$.

\end{thm}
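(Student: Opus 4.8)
The plan is to prove the six enumerations by reducing five of them to results already in hand and by constructing one genuinely new bijection for part (iv). The backbone is the chain of global bijections $\pi \mapsto B \mapsto Y$ together with the rank map $\Psi_R$, all restricted to $S_n^{R\text{-}312}$; since the hypothesis forces $R_\lambda = R$, every shape-$\lambda$ statement is an $R$-statement. For part (iii) I would simply invoke Proposition \ref{prop320.2}(ii): the restriction of $\Psi_R$ to $S_n^{R\text{-}312}$ is a bijection onto $UG_R(n)$, so $|UG_R(n)| = C_n^R$. For part (v), the $R$-rightmost clump deleting chains are the image of $S_n^{R\text{-}312}$ under $\pi \mapsto B$ by Proposition \ref{prop320.2}(i), while the gapless $\lambda$-keys form $\mathcal{Q}_\lambda = \mathcal{P}_\lambda$ by Theorem \ref{theorem340}(i); as $\pi \mapsto Y_\lambda(\pi)$ is injective, both sets have size $C_n^R$. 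For part (vi), Corollary \ref{cor520} says $\mathcal{D}_\lambda(\pi)$ is convex exactly when $\pi$ is $\lambda$-312-avoiding, and by Fact \ref{fact420}(v) the sets $\mathcal{D}_\lambda(\sigma)$ are distinct as $\sigma$ ranges over $S_n^\lambda$; hence the convex ones are counted by the $\lambda$-312-avoiding permutations, i.e. by $C_n^\lambda = C_n^R$.

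For parts (i) and (ii) I would use the identifications recorded in Section 2 of $R$-permutations with ordered $R$-partitions and with their inverse multipermutations of $\{1^{p_1}, \ldots, (r+1)^{p_{r+1}}\}$. Under the latter correspondence $R$-312-avoidance becomes 231-avoidance of the multipermutation (the parenthetical remark in Section 2), which gives (ii). For (i), the ordered $R$-partitions that avoid $312$ in the sense of \cite{GGHP} are precisely our $R$-312-avoiding $R$-permutations, and the equinumerosity of the remaining five single patterns with the $312$-count is the Wilf-type symmetry established in \cite{GGHP} (refined in \cite{CDZ}); I would cite those results rather than reprove them.

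The one part requiring real work is (iv), where I would build an explicit bijection from $UG_R(n)$ (counted by $C_n^R$ via part (iii)) to the stated $r$-tuples of shapes. Given a gapless $R$-tuple $\gamma$, encode carrel $h$ for $h \in [r]$ by the partition $\mu^{(h)}$ whose $u$-th row length is the excess $\mu^{(h)}_u := \gamma_{q_h - u + 1} - (q_h - u + 1)$. Because the entries within a carrel strictly increase and are upper, these excesses are weakly decreasing in $u$, nonnegative, and bounded by $\gamma_{q_h} - q_h \leq n - q_h$, so $\mu^{(h)}$ fits in the $p_h \times (n - q_h)$ rectangle; conversely any such shape reconstructs a legal $R$-increasing upper carrel. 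A short computation shows the last carrel is forced to equal $\{q_r+1, \ldots, n\}$, which is exactly why only $r$ shapes are recorded and why the gapless condition at $h = r$ is automatic.

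The crux is matching the inter-shape inequality to the gapless catch-up condition for $h \in [r-1]$. The first row of $\mu^{(h)}$ is $\gamma_{q_h} - q_h$, while the last row of $\mu^{(h+1)}$ is the excess $\gamma_{q_h+1} - (q_h+1)$ of the smallest entry of carrel $h+1$, and the multiplicity $t$ of that last row length in $\mu^{(h+1)}$ is precisely the length of the initial run of consecutive integers $\gamma_{q_h+1}, \gamma_{q_h+1}+1, \ldots$ opening carrel $h+1$. Thus the stated inequality becomes $\gamma_{q_h} \leq \gamma_{q_h + t}$, and I would verify that this is equivalent to the gapless requirement: when there is a descent $\gamma_{q_h} > \gamma_{q_h+1}$ the inequality forces the catch-up $s := \gamma_{q_h} - \gamma_{q_h+1} + 1 \leq t \leq p_{h+1}$ with the opening entries equal to $\gamma_{q_h}-s+1, \ldots, \gamma_{q_h}$, and conversely the catch-up yields $\gamma_{q_h+t} \geq \gamma_{q_h}$. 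I expect this equivalence, and in particular the bookkeeping that identifies the last-row multiplicity with the initial consecutive run, to be the main obstacle; once it is established, invertibility of the encoding is immediate and (iv) follows.
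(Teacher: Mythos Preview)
Your proposal is correct and follows essentially the same route as the paper's proof: parts (i), (ii), (iii), (v), (vi) are handled by citing exactly the same results (the Section~2 identifications and \cite{GGHP}, Proposition~\ref{prop320.2}, Theorem~\ref{theorem340}, Corollary~\ref{cor520}, Fact~\ref{fact420}(v)). For part (iv) the paper says only ``destrictify the gapless $R$-tuples within each carrel''; your explicit encoding $\mu^{(h)}_u := \gamma_{q_h - u + 1} - (q_h - u + 1)$ is precisely that destrictification, and your verification that the inter-shape inequality is equivalent to the gapless catch-up condition via $\gamma_{q_h} \leq \gamma_{q_h + t}$ is the detail the paper leaves to the reader.
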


\begin{proof}Part (i) first restates our $C_n^R$ definition with the terminology of \cite{GGHP}; for the second claim see the discussion below.  The equivalence for (ii) was noted in Section 2.  Use Proposition \ref{prop320.2}(ii) to confirm (iii).  For (iv), destrictify the gapless $R$-tuples within each carrel. Use Proposition \ref{prop320.2}(i) and Theorem \ref{theorem340} to confirm (v).  Part (vi) follows from Corollary \ref{cor520} and Fact \ref{fact420}(v).\end{proof}

To use the Online Encyclopedia of Integer Sequences \cite{Slo} to determine if the counts $C_n^R$ had been studied, we had to form sequences.  One way to form a sequence of such counts is to take $n := 2m$ for $m \geq 1$ and $R_m := \{ 2, 4, 6, ... , 2m-2 \}$.  Then the $C_{2m}^R$ sequence starts with 1, 6, 43, 352, 3114, ... ; this beginning appeared in the OEIS in Pudwell's A220097.  Also for $n \geq 1$ define the \emph{total parabolic Catalan number $C_n^\Sigma$} to be $\sum C_n^R$, sum over $R \subseteq [n-1]$.  This sequence starts with 1, 3, 12, 56, 284, ... ; with a `1' prepended, this beginning appeared in Sloane's A226316.  These ``hits'' led us to the papers \cite{GGHP} and \cite{CDZ}.

Let $R$ be as in the theorem.  Let $2 \leq t \leq r+1$.  Fix a permutation $\sigma \in S_t$.  Apparently for the sake of generalization in and of itself with new enumeration results as a goal, Godbole, Goyt, Herdan and Pudwell defined \cite{GGHP} the notion of an ordered partition of $[n]$ with block sizes $b_1, b_2, ... , b_{r+1}$ that avoids the pattern $\sigma$.  It appears that that paper was the first paper to consider a notion of pattern avoidance for ordered partitions that can be used to produce our $R$-312-avoiding permutations:  Take $b_1 := q_1$, $b_2 := q_2 - q_1$, ... , $b_{r+1} := n - q_r$, $t := 3$, and $\sigma := (3;1;2)$.  Their Theorem 4.1 implies that the number of such ordered partitions that avoid $\sigma$ is equal to the number of such ordered partitions that avoid each of the other five permutations for $t = 3$.  This can be used to confirm that the $C_{2m}^R$ sequence defined above is indeed Sequence A220097 of the OEIS (which is described as avoiding the pattern 123).  Chen, Dai, and Zhou gave generating functions \cite{CDZ} in Theorem 3.1 and Corollary 2.3 for the $C_{2m}^R$ for $R = \{ 2, 4, 6, ... , 2m-2 \}$ for $m \geq 0$ and for the $C_n^\Sigma$ for $n \geq 0$.  The latter result implies that the sequence A226316 indeed describes the sequence $C_n^\Sigma$ for $n \geq 0$.  Karen Collins and the second author of this paper have recently deduced that $C_n^\Sigma = \sum_{0 \leq k \leq [n/2]} (-1)^k \binom{n-k}{k} 2^{n-k-1} C_{n-k}$.

How can the $C_n^\Sigma$ total counts be modeled?  Gathering the $R$-312-avoiding permutations or the gapless $R$-tuples from Theorem \ref{theorem18.1}(ii) for this purpose would require retaining their ``semicolon dividers''.  Some other objects model $C_n^\Sigma$ more elegantly.  We omit definitions for some of the concepts in the next statement.  We also suspend our convention concerning the omission of the prefix `$[n-1]$-':  Before, a `rightmost clump deleting' chain deleted one element at each stage.  Now this unadorned term describes a chain that deletes any number of elements in any number of stages, provided that they constitute entire clumps of the largest elements still present plus possibly a subset from the rightmost of the other clumps.  When $n = 3$ one has $C_n^\Sigma = 12$.  Five of these chains were displayed in Section 6.  A sixth is \cancel{1} \cancel{2} \cancel{3}.  Here are the other six, plus one such chain for $n = 17$:

\begin{figure}[h!]
\begin{center}
\setlength\tabcolsep{.1cm}
\begin{tabular}{ccccc}
1& &2& &\cancel{3}\\
 &\cancel{1}& &\cancel{2}
\end{tabular}\hspace{7mm}
\begin{tabular}{ccccc}
1& &\cancel{2}& &3\\
 &\cancel{1}& &\cancel{3}
\end{tabular}\hspace{7mm}
\begin{tabular}{ccccc}
\cancel{1}& &2& &3\\
 &\cancel{2}& &\cancel{3}
\end{tabular}\hspace{7mm}
\begin{tabular}{ccccc}
1& &\cancel{2}& &\cancel{3}\\
 & & \cancel{1}& &
\end{tabular}\hspace{7mm}
\begin{tabular}{ccccc}
\cancel{1}& &2& &\cancel{3}\\
 & & \cancel{2}
\end{tabular}\hspace{7mm}
\begin{tabular}{ccccc}
\cancel{1}& &\cancel{2}& &{3}\\
 & &\cancel{3}
\end{tabular}
\end{center}
\end{figure}

\begin{figure}[h!]
\begin{center}
\setlength\tabcolsep{.3cm}
\begin{tabular}{ccccccccccccccccc}
1 & 2 & \cancel{3} & 4 & 5 & \cancel{6} & 7 & 8 & 9 & 10 & 11 & \cancel{12} & 13 & 14 & \cancel{15} & 16 & 17 \\
  &   & 1 & 2 & 4 & 5 & 7 & \cancel{8} & 9 & \cancel{10} & 11 & \cancel{13} & \cancel{14} & \cancel{16} & \cancel{17} & & \\
  &   &   &   &   & 1 & 2 & \cancel{4} & 5 & \cancel{7} & \cancel{9} & \cancel{11} & & & & & \\
  &   &   &   &   &   &   & \cancel{1} & \cancel{2} & \cancel{5} & & & & & & &
\end{tabular}
\end{center}
\end{figure}

\vspace{.1pc}\begin{cor}\label{cor18.2}  The total parabolic Catalan number $C_n^\Sigma$ is the number of:

\noindent (i)  ordered partitions of $\{1, 2, ... , n \}$ that avoid the pattern 312.

\noindent (ii)  rightmost clump deleting chains for $[n]$, and gapless keys whose columns have distinct lengths less than $n$.

\noindent (iii)  Schubert varieties in all of the flag manifolds $SL(n) / P_J$ for $J \subseteq [n-1]$ such that their ``associated'' Demazure tableaux form convex sets as in Section 7.  \end{cor}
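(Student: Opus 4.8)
The plan is to derive all three counts by summing the corresponding item of Theorem~\ref{theorem18.1} over every subset $R \subseteq [n-1]$, via the defining identity $C_n^\Sigma = \sum_{R \subseteq [n-1]} C_n^R$. Thus the whole corollary reduces to showing that, for each item, the family of ``global'' objects on $[n]$ is the disjoint union, indexed by $R$, of the $R$-indexed families that Theorem~\ref{theorem18.1} already counts by $C_n^R$. Concretely, for each item I would identify the invariant that recovers $R$ from a global object, then check that fixing this invariant returns precisely the $R$-indexed family.

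For Part~(i) I would attach to an ordered partition of $[n]$ its composition of block sizes $(p_1, \ldots, p_{r+1})$, which corresponds to the unique set $R = \{p_1,\, p_1+p_2,\, \ldots,\, p_1+\cdots+p_r\}$; as a composition of $n$ is exactly a choice of such an $R$, and 312-avoidance of the ordered partition is by definition $R$-312-avoidance of the associated multipermutation, summing Theorem~\ref{theorem18.1}(i) over $R$ gives~(i). For the rightmost clump deleting chains in Part~(ii), the invariant is the cardinality sequence $|B_1| < \cdots < |B_r|$ of a chain $\emptyset = B_0 \subset B_1 \subset \cdots \subset B_{r+1} = [n]$, which again pins down a unique $R$; the relaxed deletion rule recalled just before the corollary (remove entire top clumps plus a subset of the next clump at each stage) is verbatim the $R$-rightmost clump deleting condition of Fact~\ref{fact320.3}, so Theorem~\ref{theorem18.1}(v) applies and the sum over $R$ produces all such chains.

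For the gapless keys in Part~(ii), the stipulation that the column lengths be distinct and less than $n$ forces, for a given $R = \{q_1 < \cdots < q_r\}$, the unique shape $\lambda$ with exactly one column of each length $q_h$; then $R_\lambda = R$, and Theorem~\ref{theorem18.1}(v) (equivalently the bijection from $\mathcal{B}_R$ to $\mathcal{Q}_\lambda$ in Theorem~\ref{theorem340}) counts the gapless $\lambda$-keys on this shape as $C_n^R$. Here I would invoke the multiplicity-independence emphasized in Section~3---that the number of gapless $\lambda$-keys depends only on $R_\lambda$---to justify using a single canonical shape for each $R$, so that the sum over $R$ neither overcounts nor omits. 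For Part~(iii), a Schubert variety in $SL(n)/P_J$ is indexed by its minimal coset representative in $W^J$, hence (Section~3) by an $R$-permutation $\pi$ with $R = [n-1]\setminus J$; for any shape $\lambda$ with $R_\lambda = R$, Corollary~\ref{cor520} shows that the associated Demazure set $\mathcal{D}_\lambda(\pi)$ is convex exactly when $\pi$ is $R$-312-avoiding, so Theorem~\ref{theorem18.1}(vi) counts the convex ones as $C_n^R$ and summing over $J \subseteq [n-1]$ gives $C_n^\Sigma$.

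The invariant-extraction bijections are routine; the step demanding the most care is the gapless-key bookkeeping in Part~(ii). There one must verify that ``distinct column lengths less than $n$'' truly specifies one shape per $R$ and that no gapless key with a repeated column length or a trivial (length-$n$) column is being silently swept in, since otherwise the total would differ from $\sum_R C_n^R$. This is precisely where the multiplicity-independence of the gapless-key count, together with the one-column-per-length normalization, must be stated explicitly rather than left implicit.
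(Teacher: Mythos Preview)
Your proposal is correct and matches the paper's approach: the corollary is stated without a separate proof, as it follows immediately from Theorem~\ref{theorem18.1} by summing over all $R \subseteq [n-1]$, using exactly the invariant-extraction bijections you describe (block-size composition for~(i), cardinality sequence of the chain and the one-column-per-length normalization for~(ii), and the $J \leftrightarrow R$ correspondence for~(iii)). Your caution about the gapless-key bookkeeping is slightly more than needed---Theorem~\ref{theorem18.1}(v) already asserts $|\mathcal{Q}_\lambda| = C_n^R$ for \emph{any} $\lambda$ with $R_\lambda = R$, so applying it to the unique shape with one column of each length in $R$ requires no separate multiplicity-independence argument---but this does not affect correctness.
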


\noindent Part (iii) highlights the fact that the convexity result of Corollary \ref{cor520} depends only upon the information from the indexing $R$-permutation for the Schubert variety, and not upon any further information from the partition $\lambda$.  In addition to their count $op_n[(3;1;2)] = C_n^\Sigma$, the authors of \cite{GGHP} and \cite{CDZ} also considered the number $op_{n,k}(\sigma)$ of such $\sigma$-avoiding ordered partitions with $k$ blocks.  The models above can be adapted to require the presence of exactly $k$ blocks, albeit of unspecified sizes.

\vspace{.5pc}\noindent \textbf{Added Note.}  We learned of the paper \cite{MW} after posting \cite{PW2} on the arXiv.  As at the end of Section 3, let $R$ and $J$ be such that $R \cup J = [n-1]$ and $R \cap J = \emptyset$.  It could be interesting to compare the definition for what we would call an `$R$-231-avoiding' $R$-permutation (as in \cite{GGHP}) to M{\"u}hle's and Williams' definition of a `$J$-231-avoiding' $R$-permutation in Definition 5 of \cite{MW}.  There they impose an additional condition $w_i = w_k + 1$ upon the pattern to be avoided.  For their Theorems 21 and 24, this condition enables them to extend the notions of ``non-crossing partition'' and of ``non-nesting partition'' to the parabolic quotient $S_n / W_J$ context of $R$-permutations to produce sets of objects that are equinumerous with their $J$-231-avoiding $R$-permutations.  Their Theorem 7 states that this extra condition is superfluous when $J = \emptyset$.  In this case their notions of $J$-non-crossing partition and of $J$-non-nesting partition specialize to the set partition Catalan number models that appeared as Exercises 159 and 164 of \cite{Sta}.  So if it is agreed that their reasonably stated generalizations of the notions of non-crossing and non-nesting partitions are the most appropriate generalizations that can be formulated for the $S_n / W_J$ context, then the mutual cardinality of their three sets of objects indexed by $J$ and $n$ becomes a competitor to our $C_n^R$ count for the name ``$R$-parabolic Catalan number''.  This development has made the obvious metaproblem more interesting:  Now not only must one determine whether each of the 214 Catalan models compiled in \cite{Sta} is ``close enough'' to a pattern avoiding permutation interpretation to lead to a successful $R$-parabolic generalization, one must also determine which parabolic generalization applies.

\vspace{1pc}\noindent \textbf{Acknowledgments.}  We thank Keith Schneider, Joe Seaborn, and David Raps for some helpful conversations, and we are also indebted to David Raps for some help with preparing this paper.  We thank the referee for suggesting some improvements in the exposition.

\end{document}